\documentclass[12pt,twoside]{article}
\usepackage{amsmath, amsthm, amssymb}
\usepackage[colorlinks]{hyperref}
\usepackage{color}
\usepackage{enumerate}

\oddsidemargin 0 cm
\evensidemargin 0 cm
\textwidth 15.5 cm
\topmargin 0 cm
\textheight 21 cm

\pagestyle{myheadings}
\numberwithin{equation}{section}
\theoremstyle{plain}
\swapnumbers

\newtheorem{theorem}{\hskip-1mm.\,Theorem}[section]
\newtheorem{lemma}[theorem]{\hskip-1mm.\,Lemma}
\newtheorem{proposition}[theorem]{\hskip-1mm.\,Proposition}
\newtheorem{corollary}[theorem]{\hskip-1mm.\,Corollary}
\theoremstyle{definition}
\newtheorem{definition}[theorem]{\hskip-1mm.\,Definition}
\newtheorem{remark}[theorem]{\hskip-1mm.\,Remark}

\DeclareMathOperator{\var}{var}

\begin{document}

\newcommand{\dd}{\hskip0.2mm\mbox{\rm d}}
\newcommand{\N}{{\mathbb N}}
\newcommand{\R}{{\mathbb R}}
\newcommand{\dis}{\displaystyle}
\newcommand{\ab}{[a,b\,]}
\newcommand{\ia}{\noalign{\noindent{and}}}
\newcommand{\eps}{\varepsilon}
\newcommand{\inn}{\,{\in}\,}
\def\skipaline{\removelastskip\vskip10pt plus 1pt minus 1pt}
\def\skiphalfaline{\removelastskip\vskip5pt plus 1pt minus 1pt}

\title{Bounded convergence theorem for abstract Kurzweil-Stieltjes integral}

\author{
Giselle Antunes Monteiro
\hskip-1mm\thanks{Mathematical Institute, Academy of Sciences of Czech Republic, Prague, Czech Republic
(email: gam@math.cas.cz). Supported by RVO: 67985840 and by the Academic Human Resource Program
of the Academy of Sciences of the Czech Republic.}\,\,,
Umi Mahnuna Hanung
\hskip-1mm\thanks{Department of Mathematics, Universitas Gadjah Mada, Sekip Utara Bulaksumur,
55281 Yogyakarta, Indonesia;
FNWI, Korteweg de Vries Instituut voor Wiskunde, Universiteit van Amsterdam, P.O. Box 94248,
1090 GE Amsterdam, the Netherlands;
Mathematical Institute, Academy of Sciences of Czech Republic, Prague, Czech Republic
(email:  hanungum@ugm.ac.id).
Supported by the grant No.~573/E4.4/K/2011 of the Directorate General of Higher Education (DGHE),
Ministry of Education and Culture, Republic of Indonesia.}\,\,,
and Milan Tvrd\'y
\hskip-1mm\thanks{Mathematical Institute, Academy of Sciences of Czech Republic, Prague, Czech Republic
(email: tvrdy@math.cas.cz). Supported by RVO: 67985840 and by the grant No.~14-06958S of the Grant
Agency of the Czech Republic.}}
\date{\today}
\maketitle

\begin{abstract} 
In the theories of Lebesgue integration and of ordinary differential equations, the Lebesgue
Dominated Convergence Theorem provides one of the most widely used tools. Available analogy
in the Riemann or Riemann-Stieltjes integration is the Bounded Convergence Theorem, sometimes
called also the Arzel\`a or Arzel\`a-Osgood or Osgood Theorem. In the setting of the Kurzweil-Stieltjes
integral for real valued functions its proof can be obtained by a slight modification of the proof
given for the $\sigma$-Young-Stieltjes integral by T.H.~Hildebrandt in his monograph from 1963.
However, it is clear that the Hildebrandt's proof cannot be extended to the case of Banach space-valued
functions. Moreover, it essentially utilizes the Arzel\`a Lemma which does not fit too much into
elementary text-books. In this paper, we present the proof of the Bounded Convergence Theorem for
the abstract Kurzweil-Stieltjes integral in a setting elementary as much as possible.
\end{abstract}

\smallskip

\noindent{2010 {\it Mathematics Subject Classification}: 26A39, 28B05

\smallskip

\noindent{\it Key words}.
Kurzweil-Stieltjes integral, bounded convergence theorem, integral over elementary set

\smallskip

\section{Introduction}

In the theories of Lebesgue integration and of ordinary differential equations, the Lebesgue
Dominated Convergence Theorem provides one of the most widely used tools. Available analogy
in the Riemann or Riemann-Stieltjes integration is the {\em Bounded Convergence Theorem},
sometimes called also the {\em Arzel\`a} or {\em Arzel\`a-Osgood}  or {\em Osgood Theorem}.
In the setting of the Kurzweil-Stieltjes integral for real valued functions this result reads
as follows:

{\em If $F\,{:}\,\ab\to\R$ has a bounded variation on $\ab,$  $g\,{:}\ab\to\R$ is
regulated on $\ab$ and the sequence $\{g_n\}$ of functions regulated on $\ab$
is such that
\[
   \lim_{n\to\infty}g_n(t)=g(t)\quad \mbox{for \ } t\in\ab
\]
and
\[
   \|g_n\|_\infty\le K<\infty \quad\mbox{for \ } n\in\N,
\]
then}
\[
   \lim_{n\to\infty}\int_a^b\dd[F]\,g_n=\int_a^b\dd[F]\,g.
\]
The proof can be easily obtained by a slight modification of the proof given
for the $\sigma$-Young integral by Hildebrandt in \cite[Theorem II.19.3.14]{Hi},
cf.~\cite[Theorem I.4.24]{STV}. However, it is clear that the proof by Hildebrandt
cannot be extended to the case of Banach space-valued functions. Moreover, it
essentially utilizes the Arzel\`a Lemma which does not fit too much into elementary
text-books. Even in the case of the Riemann integral the elementary proof of
the corresponding Bounded Convergence Theorem has been for a long time considered
to be rather impossible as stated by Lewin in \cite{Lewin}:
\begin{quote}
``The bounded convergence theorem follows trivially from the Lebesgue dominated
convergence theorem, but at the level of an introductory course in analysis, when
the Riemann integral is being studied, how hard is the bounded convergence theorem?
For an answer, we might look at Bartle and Sherbert \cite{BaSh}, page 203:
``The proof of this result is quite delicate and will be omitted".
Or we might look at Apostol \cite{Apo}, page 228:
``The proof of Arzela's theorem is considerably more difficult than ... and will not
be given here".
Walter Rudin in \cite{Ru1} ignores the theorem altogether in his chapter on Riemann
integration, presenting it only as a corollary to the Lebesgue dominated
convergence theorem several chapters later, and in \cite{Ru2}, in an interesting
problem in Chapter Two, Rudin refers his readers to \cite{E}. In \cite{E},
Eberlein does present a proof which from some points of view is elementary. Certainly,
his proof does not require any notions of measurability, but it is hardly elementary
from the point of view of a student who is first learning the Riemann integral."
\end{quote}
Despite that, in earlier times, many other authors have dedicated themselves to obtaining
a proof independent of the theory of Lebesgue measure for such convergence result.
See for instance \cite{Lu}
and the references therein. Of course, in \cite{Lewin}, Lewin succeeded in his search
for an elementary proof that, as he claimed, ``could be included for the first time in
an introductory course''.

\smallskip

In this paper, we utilize some of the Lewin's ideas and present the proof of the Bounded
Convergence Theorem for the abstract Kurzweil-Stieltjes integral in a setting elementary
as much as possible.

\section{Preliminaries}
Throughout this paper $X$ is a Banach space and $L(X)$ is the Banach space of all bounded
linear operators on $X.$ By $\|\cdot\|_X$ we denote the norm in $X,$ while
$\|\cdot\|_{L(X)}$ denotes the usual operator norm in $L(X).$

\skipaline

For $-\infty\,{<}\,a\,{<}\,b\,{<}\,\infty,$ $\ab$ and $(a,b)$ denote the corresponding
closed and open intervals, respectively. Furthermore, $[a,b)$ and $(a,b]$ are
the corresponding half-open intervals while $[c]$ denotes the degenerate interval consisting
of a single real number $c\in\ab$.

\skipaline

A set $D=\{\alpha_0,\alpha_1,\dots,\alpha_{\nu(D)}\}\subset\ab$ with $\nu(D)\in\N$
is said to be a division of $\ab$ if
\[
    a\,{=}\,\alpha_0\,{<}\,\alpha_1\,{<}\,\dots\,{<}\,\alpha_{\nu(D)}\,{=}\,b\,.
\]
The set of all divisions of $\ab$ is denoted by $\mathcal D\ab.$ The symbol $\nu(D)$
will be kept for the number of subintervals $[\alpha_{j-1},\alpha_j]$ generated by
the division $D.$

\skipaline

A function $f:\ab\to X$ is called a finite step function on $\ab$ if there exists
a division $D=\{\alpha_0,\alpha_1,\dots,\alpha_m\}$ of $\ab$ such that $f$ is constant
on every open interval $(\alpha_{j-1},\alpha_j),\,j=1,2,\dots,m.$

\skipaline

For an arbitrary function $f{:}\,\ab\,{\to}\,X$ we set
\[
    \|f\|_{\infty}=\sup_{t\in\ab}\|f(t)\|_X
\]
and the variation of $f$ on $\ab$ is given by
\[
   \var_a^b f
   =\sup_{D\in\mathcal D\ab}\sum_{j=1}^{\nu(D)}\|f(\alpha_j)-f(\alpha_{j-1})\|_X.
\]
Sometimes, $\var_a^b f$ is also called the {\em Jordan variation} of $f$ on $\ab.$
If $\var_a^b f<\infty$ we say that $f$ is of bounded variation on $\ab.$ $BV(\ab,X)$
denotes the set of all functions $f:\ab\to X$ of bounded variation on $\ab.$

\skipaline

$C(\ab, X)$ is the set of all $X$-valued functions which are continuous on $\ab,$ while
$G(\ab,X)$ denotes the set of all regulated functions $f:\ab\to X.$ Recall that
$f{:}\,\ab\,{\to}\,X$ is regulated on $\ab$ if for each $t\in[a,b)$ there is $f(t+)\in X$
such that
\[
   \lim_{s\to t+}\|f(s)-f(t+)\|_X=0
\]
and for each $t\in (a,b\,]$ there is $f(t-)\in X$ such that
\[
   \lim_{s\to t-}\|f(s)- f(t-)\|_X=0\,.
\]
Furthermore, for $t\in\ab$ we put $\Delta^+f(t)=f(t+)-f(t),$ $\Delta^-f(t)=f(t)-f(t-)$ and
$\Delta f(t)=f(t+)-f(t-)$ (where by convention $\Delta^-f(a)=\Delta^+f(b)=0$).

\skipaline

Given $F\,{:}\,\ab\to L(X),$ we say $F$ is simply-regulated on $\ab$ if, for each $x\in X,$
the function $t\in\ab\to F(t)\,x\in X$ is regulated. In this case, we write
$F\,{\in}\,(\mathcal{B})G(\ab,L(X)).$

\skipaline

Clearly,
\[
   C(\ab,L(X))\subset G(\ab,L(X))\subset(\mathcal{B})G(\ab,L(X)).
\]
Moreover, it is known that $BV(\ab,L(X))\subset G(\ab,L(X)),$ as well.

\skipaline

Now, let us recall the definition of the abstract Kurzweil-Stieltjes integral as introduced
by \v{S}.~Schwabik in \cite{Sch1}.

\smallskip

Systems $P\,{=}\,\{(\tau_j,[\alpha_{j-1},\alpha_j])\,{:}\,j=1,\dots,m\},$ where
$\{\alpha_0,\alpha_1,\dots,\alpha_m\}\in\mathcal{D}\ab$ and
$\tau_j\,{\in}\,[\alpha_{j{-}1},\alpha_j]$ for $j\,{=}\,1,\dots,m,$ are called
{\em tagged divisions} of $\ab.$

\smallskip

Furthermore, functions $\delta\,{:}\,\ab\,{\to}\,(0,\infty)$ are said to be {\em gauges}
on $\ab.$ Given a~gauge $\delta$ on $\ab,$ the tagged division
$P\,{=}\,\{(\tau_j,[\alpha_{j-1},\alpha_j]){:}\,j=1,\dots,m\}$ of $\ab$ is said to be
{\em $\delta$-fine} if
\[
   [\alpha_{j-1},\alpha_j]\subset(\tau_j-\delta(\tau_j),\tau_j+\delta(\tau_j))
   \quad\mbox{for \ } j=1,\dots,m\,.
\]
We remark that for an arbitrary gauge $\delta$ on $\ab$ there always exists a $\delta$-fine
tagged division of $\ab.$ This is stated by the Cousin lemma (see \cite[Lemma 1.4]{Sch}).

\smallskip

For given functions $F\,{:}\,\ab\to L(X)$ and $g\,{:}\,\ab\to X$ and a tagged division
$P\,{=}\,\{(\tau_j,[\alpha_{j-1},\alpha_j])\,{:}\,j=1,\dots,m\}$ of $\ab,$ we define
\begin{align*}
   &S(F,\dd g,P)=\sum_{j=1}^m F(\tau_j)\,[g(\alpha_j)-g(\alpha_{j-1})]
  \\\noalign{\noindent\mbox{and}}
   &S(\dd F,g,P)=\sum_{j=1}^m[F(\alpha_j)-F(\alpha_{j-1})]\,g(\tau_j)\,.
\end{align*}
We say that $I\inn X$ is the Kurzweil-Stieltjes integral (or shortly KS-integral)
of $F$ with respect to $g$ from $a$ to $b$ and write
\[
   I=\int_a^bF\,\dd\,[g]
\]
if for every $\eps>0$ there exists a gauge $\delta$ on $\ab$ such that
\[
   \Big\|S(F,\dd g,P)-I\Big\|_X<\eps
   \quad\mbox{for all \ } \delta\mbox{-fine tagged divisions \ } P \mbox{\ of \ }\ab\,.
\]
Similarly, $J\inn X$ is the KS-integral of $g$ with respect to $F$ from $a$ to $b$
if for every $\eps>0$ there exists a gauge $\delta$ on $\ab$ such that
\[
   \Big\|S(\dd F,g,P)-J\Big\|_X<\eps
   \quad\mbox{for all \ } \delta\mbox{-fine tagged divisions \ } P \mbox{\ of \ }\ab\,.
\]
In this case we write $\dis J=\int_a^b\dd\,[F]\,g.$

For the basic properties of the abstract KS-integral, we refer to \cite{Sch1}--\cite{Sch3}
and \cite{MT1}--\cite{MT2}.

\section{Jordan decomposition}\label{sec3}
In this section we will show that, as in the case of real valued functions, any
function of bounded variation on $\ab$ can be written as the sum of a continuous function and a break
function.

Recall that a function $f{:}\,\ab\,{\to}\,X$ is said to be a break function if there exist sequences
\[
   \{s_k\}\subset\ab,\quad \{c_k\}\subset X,\quad \{d_k\}\subset X,
\]
such that
\begin{align}\label{break-1}
   &s_k\ne s_\ell \quad\mbox{if \ } k\ne\ell,\quad
   \sum_{k=1}^\infty\big(\|c_k\|_X+\|d_k\|_X\big)<\infty,
  \\\ia\label{break-2}
   &f(t)=\dis\sum_{k=1}^\infty\Big(c_k\,\chi_{(s_k,b]}(t)+d_k\,\chi_{[s_k,b]}(t)\Big)
   \quad\mbox{ \ for \ }t\in\ab.
\end{align}
Since for any $t\in\ab,$ the series on the right-hand side of \eqref{break-2} converges
absolutely, it converges also unconditionally (see \cite[Chapter VI]{Di}). Hence we can
write also
\begin{equation}\label{break-3}
  f(t)=\sum_{a\le s_k<t}c_k+\sum_{a<s_k\le t}d_k \quad\mbox{for \ }t\in\ab.
\end{equation}

\skipaline

It is not difficult to see that, if $f$ is given by \eqref{break-3} (or \eqref{break-2}),
then
\[
    \Delta^+f(s_k)=c_k\quad\mbox{and}\quad
    \Delta^-f(s_k)=d_k\quad\mbox{for \ }k\in\N.
\]
In particular, the one-sided limits are given by the expressions
\begin{align*}
   f(t+)&=\sum_{a\le s_k\le t}c_k+\sum_{a<s_k\le t}d_k \quad\mbox{ \ for \ }t\in[a,b),
  \\
   f(t-)&=\sum_{a\leq s_k<t}c_k+\sum_{a<s_k<t}d_k \quad\mbox{ \ for \ }t\in(a,b].
\end{align*}
Moreover, $f\in BV(\ab,X)$ and
\[
   \var_a^bf=\sum_{t\in[a,b)}\|\Delta^+f(t)\|_X+\sum_{t\in(a,b]}\|\Delta^-f(t)\|_X.
\]

\smallskip

\begin{theorem}\label{jordan}
If $f\in BV(\ab,X)$ then there exists a break function $f^B{:}\,\ab\,{\to}\,X$ such that
$f-f^B$ is continuous on $\ab.$ Moreover, the function $f^B$ is determined uniquely up
to an~additive constant.
\end{theorem}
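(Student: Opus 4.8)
The plan is to peel off from $f$ its jump part as a break function $f^B$ and then to check that the remainder $f-f^B$ carries no jumps and is therefore continuous; the only tools I will use are the norm of $X$ and the finiteness of $\var_a^bf$. Since $f\in BV(\ab,X)\subset G(\ab,X)$, the one-sided limits $f(t\pm)$ exist at every point, so the jumps $\Delta^+f(t)$ and $\Delta^-f(t)$ are well defined. The decisive estimate is that for any finite set of points the sum of the associated jump-norms is bounded by $\var_a^bf$; I would prove it by placing, around each chosen interior point $s$, three nearby nodes $s-\eta<s<s+\eta$ of a division with $f(s-\eta)$ close to $f(s-)$ and $f(s+\eta)$ close to $f(s+)$, so that the increments across them approximate $\|\Delta^-f(s)\|_X$ and $\|\Delta^+f(s)\|_X$, and then letting $\eta\to0$. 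It follows that for each $n$ only finitely many $t$ satisfy $\|\Delta^+f(t)\|_X+\|\Delta^-f(t)\|_X>1/n$, so the discontinuity set $S$ is at most countable; enumerating $S=\{s_k\}$ and setting $c_k=\Delta^+f(s_k)$ and $d_k=\Delta^-f(s_k)$, the same estimate gives $\sum_k(\|c_k\|_X+\|d_k\|_X)\le\var_a^bf<\infty$, which is precisely \eqref{break-1}.

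With this data I would define $f^B$ through the series \eqref{break-2}. Because the coefficients are absolutely summable and each characteristic function is bounded by $1$, the partial sums converge uniformly, so $f^B$ is a genuine break function; by the jump identities recorded just before the statement, $\Delta^+f^B(s_k)=c_k$ and $\Delta^-f^B(s_k)=d_k$, while $\Delta^\pm f^B(t)=0$ for $t\notin S$. Set $g=f-f^B$, a regulated function as a difference of regulated functions. By the linearity of one-sided limits, $\Delta^+g(t)=\Delta^+f(t)-\Delta^+f^B(t)$ and $\Delta^-g(t)=\Delta^-f(t)-\Delta^-f^B(t)$; at each $s_k$ both differences vanish by construction, and for $t\notin S$ both summands vanish by the definition of $S$. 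Thus $g$ has no jumps, and since a regulated function all of whose jumps are zero is continuous, $g=f-f^B$ is continuous. This settles existence.

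For uniqueness, suppose $f^B_1$ and $f^B_2$ are break functions for which both $f-f^B_1$ and $f-f^B_2$ are continuous. Then $h:=f^B_1-f^B_2=(f-f^B_2)-(f-f^B_1)$ is continuous, and, merging the two defining series over the union of their jump points, $h$ is again a break function. Continuity of $h$ forces all of its jumps to vanish: every $c$-coefficient is zero (including a possible right-jump at $a$) and every $d$-coefficient attached to a point $s>a$ is zero. The only coefficient that may survive is a $d$-coefficient located at $s=a$, which \eqref{break-2} contributes as the constant term $d\,\chi_{[a,b]}$; hence $h$ is constant and $f^B$ is determined up to an additive constant.

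The step I expect to be the genuine obstacle is the estimate $\sum_t(\|\Delta^+f(t)\|_X+\|\Delta^-f(t)\|_X)\le\var_a^bf$, on which both the countability of $S$ and the summability in \eqref{break-1} rest. The Banach-space setting itself causes no trouble, since only norms enter and no order structure is needed; the care required is purely combinatorial, namely arranging a single division that simultaneously exhibits the left and the right increments at each of finitely many prescribed points while keeping the sampled values within a prescribed tolerance of the true one-sided limits.
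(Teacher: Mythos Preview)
Your proof is correct and follows essentially the same route as the paper: build $f^B$ from the jump data of $f$, verify that $f-f^B$ has vanishing one-sided jumps and is therefore continuous, and handle uniqueness by showing that the difference of two candidate break parts must be a continuous break function, hence constant. The only distinction is that you supply the jump-summability estimate $\sum_t(\|\Delta^+f(t)\|_X+\|\Delta^-f(t)\|_X)\le\var_a^bf$ and the countability of the discontinuity set from scratch, whereas the paper imports both facts from the literature and phrases uniqueness by matching coefficients rather than via the difference $h=f_1^B-f_2^B$.
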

\begin{proof} Recall that a regulated function can have at most a countable number
of points of discontinuity (see \cite[Corollary I.3.2]{H}). Let $\{s_k\}$ be the set
of points of discontinuity of $f$ in $\ab$ and consider the function
$f^B{:}\,\ab\,{\to}\,X$ given by
\begin{equation}\label{f-2}
    f^B(t)=\sum_{a\le s_k<t}\Delta^+f(s_k)+\sum_{a<s_k\le t}\Delta^-f(s_k)
    \quad\mbox{for \ } t\in\ab.
\end{equation}
By \cite[Lemma 4.1]{MT2}, both series on the right-hand side of \eqref{f-2} are absolutely
convergent, thus $f^B$ is a break function. Moreover, for $t\in\ab$ we have
\[
   \Delta^+f^B(t)=\Delta^+f(t)
   \quad\mbox{and}\quad
   \Delta^-f^B(t)=\Delta^-f(t).
\]
This implies that the function $f^{\,C}=f-f^B$ is continuous on $\ab.$

\smallskip

To prove uniqueness, assume that $f=\widetilde{f}^{\,C}+\widetilde{f}^B,$ where
$\widetilde{f}^{\,C}$ is continuous and $\widetilde{f}^B$ is a break function. Let
\[
   \widetilde{f}^B(t)=\sum_{a\le t_k<t}c_k+\sum_{a<t_k\le t}d_k \quad\mbox{for \ } t\in\ab,
\]
for some sequences $\{t_k\}\subset\ab,$ \ $\{c_k\},\,\{d_k\}\subset X$ fulfilling
\eqref{break-1}. For each $k\in\N,$ one can easily verify that
\begin{equation}\label{c_k}
   c_k=\Delta^+\widetilde{f}^{\,B}(t_k)=\Delta^+f(t_k)=\Delta^+f^B(t_k).
\end{equation}
However, this has a sense only if $t_k=s_{\ell_k}$ for some $\ell_k\in\N.$ Analogously,
we can see that
\begin{equation}\label{d_k}
   d_k=\Delta^-\widetilde{f}^{\,B}(t_k)=\Delta^-f^{\,B}(s_{n_k})
\end{equation}
for some $n_k\in\N.$ In view of this and since the series above are unconditionally
convergent (see \cite[Chapter VI]{Di}), we have
$f^B(t)-\widetilde{f}^{\,B}(t)=f^B(a)-\widetilde{f}^{\,B}(a)$ for $t\in\ab.$
This completes the proof.
\end{proof}

\skipaline

\begin{remark}\label{jordan-rem}
Let $f\in BV(\ab,X)$ and let $f^B$ be given as in Theorem~\ref{jordan}. Let us denote $f^{\,C}:=f-f^B.$
Then $f=f^{\,C}+f^B$ and we say that $f$ is decomposed into the sum of its {\em continuous part} $f^{\,C}$
and its {\em break part} $f^B.$ Such a decomposition is in the classical case of real valued functions
called the {\em Jordan decomposition} of $f.$

Furthermore, note that, in view of \eqref{c_k} and \eqref{d_k}, we have also
\[
  f^B(t)=\sum_{k=1}^\infty
         \big[\Delta^+f(s_k)\,\chi_{(s_k,b]}(t)+\Delta^-f(s_k)\,\chi_{[s_k,b]}(t)\big]
  \quad\mbox{for \ } t\in\ab.
\]
Let us put
\[
  f_n^B(t)=\sum_{k=1}^n
         \big[\Delta^+f(s_k)\,\chi_{(s_k,b]}(t)+\Delta^-f(s_k)\,\chi_{[s_k,b]}(t)\big]
  \quad\mbox{for \ } t\in\ab \mbox{ \ and \ } n\in\N.
\]
Then it is not difficult to prove that $\dis\lim_{n\to\infty}\var_a^b(f_n^B-f^B)=0$
(cf.~\cite[Lemma I.4.23]{STV} for the proof of an analogous assertion for real valued
functions).
\end{remark}

\section{Variation on elementary sets}\label{Sec4}
First, motivated by \cite[Definition 6.1]{G}, we will introduce the definition
of a~variation over arbitrary intervals.

\smallskip

\begin{definition}\label{var-J}
Let $J$ be a bounded interval in $\R.$ We say that a finite set
\[
   D\,{=}\,\{\alpha_0,\alpha_1,\dots,\alpha_{\nu(D)}\}\subset J
\]
is a {\em generalized division of} $J$ if
\[
   \alpha_0<\alpha_1<\dots<\alpha_{\nu(D)}.
\]
The set of all generalized divisions of the interval $J$ is denoted by $\mathcal{D^*}(J).$

Let $f{:}\,\ab\,{\to}\,X$ and let $J$ be an arbitrary subinterval of $\ab.$
Then we define the variation of $f$ on $J$  by
\[
    \var_J f=\sup_{D\in\mathcal{D^*}(J)}
          \left\{\,\sum_{j=1}^{\nu(D)}\|f(\alpha_j)-f(\alpha_{j-1})\|_X\,\right\}.
\]
We say that $f$ is of bounded variation on $J$ if $\var_J f<\infty.$ In such a case,
we write $f\in BV(J,X).$ For convention we set also
\[
  \var_\emptyset f=0\mbox{ \ and \ }\var_{[c]} f=0\quad\mbox{for \ } c\in\ab.
\]
\end{definition}

\smallskip

\begin{remark}\label{P4.2}
It is easy to see that Definition~\ref{var-J} coincides with the definition of
the variation in the sense of Jordan if $J$ is a compact interval, that is, for
$f{:}\,\ab\,{\to}\,X$ and $J=[c,d]\subseteq\ab$ we have
\[
   \var_{[c,d]}f=\var_c^{\,d}f.
\]
For this reason, in the case of a compact interval, {\em we may always restrict
ourselves to the divisions containing their end points} (as defined in Section~2).

Moreover, it is easy to see that if $J$ is a bounded interval and $f\in BV(J,X),$ then
$f$ is bounded on $J.$
\end{remark}

\smallskip

\begin{proposition}
Let $f\,{:}\,\ab\to X$ and let $J_1$ and $J_2$ be subintervals of $\ab$ such that
$J_2\subseteq J_1$. Then
\[
   \var_{J_2}f\le\var_{J_1}f.
\]
In particular, if $J$ is a subinterval of $\ab$ and $f\in BV(J,X),$ then
$f\in BV(I,X)$ for every interval $I\subseteq J.$
\end{proposition}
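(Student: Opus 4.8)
The plan is to reduce the inequality to the elementary fact that the supremum of a family of nonnegative numbers taken over a smaller index set cannot exceed the supremum taken over a larger one. The key observation is that every generalized division of $J_2$ is automatically a generalized division of $J_1$. Indeed, if $D=\{\alpha_0,\alpha_1,\dots,\alpha_{\nu(D)}\}\in\mathcal{D^*}(J_2)$, then its points lie in $J_2\subseteq J_1$ and satisfy $\alpha_0<\alpha_1<\dots<\alpha_{\nu(D)}$, so by Definition~\ref{var-J} we also have $D\in\mathcal{D^*}(J_1)$. Hence $\mathcal{D^*}(J_2)\subseteq\mathcal{D^*}(J_1)$.

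First I would dispose of the degenerate cases. If $J_2$ is empty or reduces to a single point, then $\var_{J_2}f=0$ by the conventions $\var_\emptyset f=0$ and $\var_{[c]}f=0$ fixed in Definition~\ref{var-J}; since each variation is a supremum of nonnegative quantities we have $\var_{J_1}f\ge 0=\var_{J_2}f$, and the inequality holds trivially. For the remaining case, I would fix an arbitrary $D\in\mathcal{D^*}(J_2)$ and note that, by the inclusion above, the corresponding sum $\sum_{j=1}^{\nu(D)}\|f(\alpha_j)-f(\alpha_{j-1})\|_X$ is one of the terms over which the supremum defining $\var_{J_1}f$ is taken; therefore it is bounded above by $\var_{J_1}f$. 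Taking the supremum of the left-hand side over all $D\in\mathcal{D^*}(J_2)$ then yields $\var_{J_2}f\le\var_{J_1}f$, which is the desired estimate. The \emph{in particular} clause follows at once: if $f\in BV(J,X)$ and $I\subseteq J$, then the monotonicity just established gives $\var_I f\le\var_J f<\infty$, so $f\in BV(I,X)$.

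Main obstacle: there is essentially none of substance. The argument is a one-line inclusion of the sets of generalized divisions followed by monotonicity of the supremum over an enlarged index set. The only point requiring any attention is the bookkeeping for degenerate subintervals, but this is completely absorbed by the conventions adopted in Definition~\ref{var-J}, so no genuine difficulty arises.
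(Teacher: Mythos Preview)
Your proposal is correct and is exactly the natural argument: the inclusion $\mathcal{D^*}(J_2)\subseteq\mathcal{D^*}(J_1)$ together with monotonicity of the supremum. The paper itself states this proposition without proof, treating it as an immediate consequence of Definition~\ref{var-J}, so your write-up simply fills in the obvious details the authors chose to omit.
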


\smallskip

The next theorem presents an equivalent formulation of the variation on arbitrary
intervals commonly found in literature.

\smallskip

\begin{theorem}\label{T4.4}
Let $f\,{:}\,\ab\,{\to}\,X$ and $c,d\in\ab,$ with $c<d.$
\begin{enumerate}[{\rm(i)}]
\item If $f\in BV([c,d),X),$ then
\quad $\var_{[c,d)}f=\lim_{\delta\to 0+}\var_c^{d{-}\delta}f=\sup_{t\in[c,d)}\var_c^tf.$

\item If $f\in BV((c,d],X),$ then
\quad $\var_{(c,d]}f=\lim_{\delta\to 0+}\var_{c{+}\delta}^{\,d}f=\sup_{t\in(c,d]}\var_t^{\,d}f.$

\item If $f\in BV((c,d),X),$ then
\quad $\var_{(c,d)}f=\lim_{\delta\to 0+}\var_{c{+}\delta}^{d-\delta}f.$
\end{enumerate}
\end{theorem}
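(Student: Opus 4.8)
The three statements share the same structure, so the plan is to prove (i) in full and then to indicate that (ii) and (iii) follow by the same argument, with the obvious symmetric modifications in (ii) and a two-sided shrinking in (iii). Throughout I would lean on two facts already available: by Remark~\ref{P4.2} one has $\var_{[c,t]}f=\var_c^t f$ for every compact subinterval, and by the preceding Proposition the variation is monotone with respect to inclusion of intervals. These two are exactly the tools needed to pass between the ``generalized division'' viewpoint of Definition~\ref{var-J} and the ordinary Jordan variation.

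First I would observe that the map $t\mapsto\var_c^t f$ is nondecreasing on $[c,d)$: if $c\le s\le t<d$ then $[c,s]\subseteq[c,t]$, whence $\var_c^s f\le\var_c^t f$ by the Proposition. Consequently the one-sided limit $\lim_{\delta\to 0+}\var_c^{d-\delta}f$ exists (as a supremum over a monotone family) and equals $\sup_{t\in[c,d)}\var_c^t f$. This settles the second equality in (i) at once and reduces the task to proving the single identity $\var_{[c,d)}f=\sup_{t\in[c,d)}\var_c^t f$.

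For that identity I would establish two inequalities. For ``$\ge$'': every $t\in[c,d)$ gives $[c,t]\subset[c,d)$, so each division of $[c,t]$ is a generalized division of $[c,d)$; hence $\var_c^t f=\var_{[c,t]}f\le\var_{[c,d)}f$, and taking the supremum over $t$ yields $\sup_{t\in[c,d)}\var_c^t f\le\var_{[c,d)}f$. For ``$\le$'': given any $D=\{\alpha_0,\dots,\alpha_m\}\in\mathcal{D}^*([c,d))$ one has $c\le\alpha_0$ and $\alpha_m<d$, so the associated variation sum is bounded by $\var_{\alpha_0}^{\alpha_m}f\le\var_c^{\alpha_m}f\le\sup_{t\in[c,d)}\var_c^t f$; passing to the supremum over $D$ gives $\var_{[c,d)}f\le\sup_{t\in[c,d)}\var_c^t f$. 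Combining the two inequalities proves (i), and statement (ii) is entirely symmetric, now using that $t\mapsto\var_t^{\,d}f$ is nonincreasing.

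For (iii) the only difference is that a generalized division $D\in\mathcal{D}^*((c,d))$ satisfies $c<\alpha_0$ and $\alpha_m<d$, so one picks $\delta>0$ with $\delta\le\min\{\alpha_0-c,\,d-\alpha_m\}$ to obtain $D\subset[c+\delta,d-\delta]$ and thus the variation sum $\le\var_{c+\delta}^{d-\delta}f$; the reverse inequality again uses $[c+\delta,d-\delta]\subset(c,d)$. I expect no genuine obstacle here: the whole proof is a careful sandwiching argument, and the only point demanding attention is that the endpoints of a generalized division need \emph{not} coincide with $c$ nor reach $d$. That gap is precisely what the monotonicity of the Jordan variation (via the Proposition) is invoked to absorb, so once that monotonicity is in hand the remaining steps are routine.
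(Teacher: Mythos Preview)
Your proof is correct and follows essentially the same approach as the paper: both arguments hinge on the observation that any generalized division $D=\{\alpha_0,\dots,\alpha_m\}$ of $[c,d)$ has $\alpha_m<d$, so its variation sum is dominated by $\var_c^{\alpha_m}f$. The only cosmetic difference is that the paper casts the ``$\le$'' direction as a contradiction (assuming $M<\var_{[c,d)}f$ and deriving $M<M$), whereas you give the same inequality directly; your version is marginally cleaner, but the underlying idea is identical.
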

\begin{proof} We will prove the assertion (i), the remaining ones follow in a similar way.

\smallskip

For a fixed $\delta>0,$ consider a division $D=\{\alpha_0,\alpha_1,\ldots,\alpha_{\nu(D)}\}$
of $[c,d{-}\delta].$ Of course, $D$ is also a generalized division of $[c,d)$ and hence
\[
   \sum_{j=1}^{\nu(D)}\|f(\alpha_j)-f(\alpha_{j-1})\|_X\le\var_{[c,d)}f\,.
\]
Thus, taking the supremum over all divisions of $[c,d-\delta],$ we get
$\var_c^{\,d{-}\delta}f\le\var_{[c,d)}f.$ Since this inequality holds for every
$\delta>0,$ it follows that
\[
    M:=\sup_{t\in[c,d)}\var_c^t f
      =\lim_{\delta\to 0+}\var_c^{d{-}\delta}f\le\var_{[c,d)}f\,.
\]
Now, assume that $\dis M<\var_{[c,d)}f.$ Then, for $\eps=\var_{[c,d)}f-M,$ there exists
a division $D\,{=}\,\{\alpha_0,\alpha_1,\ldots,\alpha_{\nu(D)}\}$ of $[c,d)$ such that
\[
   M=\var_{[c,d)}f-\varepsilon<\sum_{j=1}^{\nu(D)}\|f(\alpha_j)-f(\alpha_{j-1})\|_X
                            \le\var_c^{\alpha_{\nu(D)}}f\le M,
\]
a contradiction. This completes the proof of (i).
\end{proof}

\smallskip

Dealing with functions with values in a metric space, Chistyakov presents in \cite{Ch}
an extensive study of the properties of the variation over subsets of the real line.
Here, we call the reader's attention to a particular result (see \cite[Corollary 4.7]{Ch})
connecting the variation over arbitrary intervals and the Jordan variation over a compact
interval. This will be the content of Theorem~\ref{vars} whose proof is included for sake
of completeness. To this aim, the next lemma will be usefull.

\smallskip

\begin{lemma}\label{limits}
Let $f{:}\,\ab\,{\to}\,X,$  $a\le c<d\le b$ and $f\in BV((c,d),X).$
Then both limits $f(c+)$ and $f(d-)$ exist.
\end{lemma}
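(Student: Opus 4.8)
The plan is to exploit the completeness of $X$ together with the monotonicity of the variation established in the Proposition above. Since $X$ is a Banach space, it suffices to verify a Cauchy condition: the limit $f(c+)$ exists provided that for every $\eps>0$ there is $\delta>0$ with $\|f(s)-f(t)\|_X<\eps$ whenever $s,t\in(c,c+\delta)$, and symmetrically for $f(d-)$. I will sketch the argument for $f(c+)$; the case of $f(d-)$ is entirely analogous.

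First, I would fix a point $c_0\in(c,d)$ and introduce the auxiliary function $\psi(s)=\var_s^{c_0}f$ for $s\in(c,c_0]$. For $c<s\le c_0$, any division of $[s,c_0]$ is a generalized division of $(c,d)$, so by Remark~\ref{P4.2} and the Proposition we have $\psi(s)=\var_{[s,c_0]}f\le\var_{(c,d)}f<\infty$; thus $\psi$ is bounded. Moreover, for $c<s_1<s_2\le c_0$ we have $[s_2,c_0]\subseteq[s_1,c_0]$, so the Proposition gives $\psi(s_2)\le\psi(s_1)$, i.e.\ $\psi$ is non-increasing. Being bounded and monotone on $(c,c_0]$, the function $\psi$ has a limit as $s\to c+$ (indeed $\lim_{s\to c+}\psi(s)=\sup_{s\in(c,c_0]}\psi(s)$).

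The next step is to control increments of $f$ by increments of $\psi$. Using the additivity of the Jordan variation on compact intervals, for $c<s<t\le c_0$ one has $\var_s^{c_0}f=\var_s^tf+\var_t^{c_0}f$, and since $\{s,t\}$ is a division of $[s,t]$,
\[
   \|f(t)-f(s)\|_X\le\var_s^tf=\psi(s)-\psi(t).
\]
Because $\lim_{s\to c+}\psi(s)$ exists, $\psi$ satisfies the Cauchy condition at $c+$: given $\eps>0$ there is $\delta\in(0,c_0-c)$ with $|\psi(s)-\psi(t)|<\eps$ for all $s,t\in(c,c+\delta)$. Combined with the displayed inequality this yields $\|f(t)-f(s)\|_X<\eps$ for all such $s,t$, which is precisely the Cauchy condition for $f$ at $c+$. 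By completeness of $X$, the limit $f(c+)$ exists.

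Anchoring instead at a point to the right and working with the variation on $[c_0,t]$ as $t\to d-$ gives the existence of $f(d-)$ by the identical argument. The only point requiring care is the passage between the variation on the open interval $(c,d)$ and the Jordan variation on its compact subintervals, which is handled by the monotonicity Proposition together with Remark~\ref{P4.2}; the additivity of the Jordan variation invoked above is the standard identity for compact intervals. I expect this interplay, rather than any genuine analytic difficulty, to be the main thing to get right.
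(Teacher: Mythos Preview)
Your proof is correct and follows essentially the same strategy as the paper: exploit that the variation function is bounded and monotone, deduce a Cauchy condition for $f$ near the endpoint, and invoke completeness of $X$. The paper proves $f(d-)$ first, anchors at the endpoint $c$ (working with $\var_c^{t}f$), uses a sequential Cauchy criterion, and appeals to Theorem~\ref{T4.4}(i); you anchor at an interior point $c_0$ (working with $\var_s^{c_0}f$) and use the $\eps$--$\delta$ Cauchy criterion directly. Your choice of an interior anchor is a small but genuine improvement: since you only ever use compact subintervals of $(c,d)$, the bound $\psi(s)\le\var_{(c,d)}f$ follows immediately from the monotonicity Proposition, whereas the paper's use of $\var_c^{d-\delta}f$ and $\var_{[c,d)}f$ tacitly needs the observation that $f\in BV((c,d),X)$ implies $f\in BV([c,d),X)$ (true, but not yet established at that point in the paper).
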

\begin{proof}
Let $\eps>0$ and an increasing sequence $\{t_n\}\subset (c,d)$ tending to $d$ be given.
By Theorem~\ref{T4.4}~(i) there is $\delta>0$ such that
\[
   0<\var_{[c,d)}f-\var_c^{d-\delta}f<\varepsilon.
\]
Choose $n_0\in\N$ in such a way that $t_n>d-\delta$ for every $n\ge n_0.$ Therefore,
for $n>m\ge n_0,$ we have
\begin{align*}
    \|f(t_n)-f(t_m)\|_{X}&\le\var_{t_m}^{t_n}f=\var_c^{t_n}f-\var_c^{t_m}f
  \\
    &\le\var_{[c,d)}f-\var_c^{d-\delta}f<\eps
\end{align*}
wherefrom the existence of the limit $f(d-)$ immediately follows.

The existence of the limit $f(c+)$ can be proved analogously.
\end{proof}

\smallskip

\begin{theorem}\label{vars}
Let $f{:}\,\ab\,{\to}\,X$ and $a\le c<d\le b.$
\begin{enumerate}[{\rm(i)}]
\item If $f\in BV([c,d),X)$, then $f(d-)$ exists and
\[
   \var_c^{\,d} f=\var_{[c,d)}f+\|\Delta^-f(d)\|_X.
\]
\item If $f\in BV((c,d],X)$, then $f(c+)$ exists and
\[
\var_c^{\,d} f=\var_{(c,d]}f+\|\Delta^+f(c)\|_X.
\]
\item If $f\in BV((c,d),X)$, then both limits $f(c+)$ and $f(d-)$ exist
and
\[
\var_c^{\,d} f=\var_{(c,d)}f+\|\Delta^+f(c)\|_X+\|\Delta^-f(d)\|_X.
\]
\end{enumerate}
\end{theorem}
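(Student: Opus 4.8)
The plan is to establish (i) directly by proving two opposite inequalities, to obtain (ii) by the mirror-image argument, and to deduce (iii) by splitting the interval at an interior point and combining (i) and (ii). First note that each hypothesis forces $f\in BV((c,d),X)$ (by the Proposition, as $(c,d)$ is a subinterval of each of the three intervals appearing in (i)--(iii)), so Lemma~\ref{limits} guarantees that the one-sided limits $f(c+)$ and $f(d-)$ exist wherever they are needed. This is what makes the quantities $\Delta^-f(d)=f(d)-f(d-)$ and $\Delta^+f(c)=f(c+)-f(c)$ meaningful.

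For (i), I would prove $\var_c^{\,d}f\le\var_{[c,d)}f+\|\Delta^-f(d)\|_X$ and the reverse inequality separately. For the upper bound, take any division $c=\alpha_0<\dots<\alpha_m=d$ of $[c,d]$ and isolate the last summand $\|f(d)-f(\alpha_{m-1})\|_X$; given $\eps>0$, insert a point $t\in(\alpha_{m-1},d)$ with $\|f(t)-f(d-)\|_X<\eps$ and use the triangle inequality to bound this summand by $\|\Delta^-f(d)\|_X+\eps+\|f(t)-f(\alpha_{m-1})\|_X$. The remaining terms together with $\|f(t)-f(\alpha_{m-1})\|_X$ form the variation sum of the generalized division $\{\alpha_0,\dots,\alpha_{m-1},t\}$ of $[c,d)$, hence are bounded by $\var_{[c,d)}f$; taking the supremum over all divisions of $[c,d]$ and then letting $\eps\to0+$ yields the bound (and in particular the finiteness of $\var_c^{\,d}f$). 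For the reverse inequality, start from a generalized division of $[c,d)$ containing $c$, append an approximating point $t$ near $d$ and the point $d$ itself to obtain a division of $[c,d]$, and read off $\var_{[c,d)}f+\|\Delta^-f(d)\|_X\le\var_c^{\,d}f$ after discarding a nonnegative term. The proof of (ii) is entirely symmetric, using $f(c+)$ and approximating points near $c$.

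For (iii), I would fix an interior point $e\in(c,d)$ and use the additivity of the Jordan variation over compact intervals, $\var_c^{\,d}f=\var_c^{\,e}f+\var_e^{\,d}f$. Applying (ii) to $(c,e]$ and (i) to $[e,d)$ gives $\var_c^{\,e}f=\var_{(c,e]}f+\|\Delta^+f(c)\|_X$ and $\var_e^{\,d}f=\var_{[e,d)}f+\|\Delta^-f(d)\|_X$. It then remains to verify the additivity of the generalized variation, $\var_{(c,d)}f=\var_{(c,e]}f+\var_{[e,d)}f$, which I would derive from Theorem~\ref{T4.4}: writing $\var_{c+\delta}^{\,d-\delta}f=\var_{c+\delta}^{\,e}f+\var_e^{\,d-\delta}f$ for small $\delta>0$ and passing to the limit as $\delta\to0+$, each term converges to the corresponding generalized variation by parts (i)--(iii) of that theorem.

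The main obstacle is the handling of the one-sided limit $f(d-)$ (respectively $f(c+)$): it is not a value of $f$ at any point of the interval, so every estimate involving the jump $\Delta^-f(d)$ must route through a nearby interior point $t$ chosen so that $\|f(t)-f(d-)\|_X$ is small, and one must keep careful track of which divisions contain the endpoints $c$ and $d$. A minor but recurring normalization is that for the half-open interval $[c,d)$ one may restrict to generalized divisions containing $c$, since adjoining $c$ never decreases a variation sum; the analogous convention is used near every endpoint.
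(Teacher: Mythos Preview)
Your proposal is correct and follows the paper's overall strategy: the upper bound in (i) is proved exactly as in the paper, by inserting an approximating point between $\alpha_{m-1}$ and $d$ and using the triangle inequality, and (ii) is handled by symmetry in both accounts.

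There are two minor tactical differences worth noting. For the reverse inequality in (i), the paper argues via Theorem~\ref{T4.4}: from $\|f(d)-f(d-\delta)\|_X\le\var_{d-\delta}^{\,d}f=\var_c^{\,d}f-\var_c^{\,d-\delta}f$ it lets $\delta\to0+$ and identifies $\lim_{\delta\to0+}\var_c^{\,d-\delta}f=\var_{[c,d)}f$. Your direct construction---appending an interior point $t$ close to $d$ and then $d$ itself to a generalized division of $[c,d)$---achieves the same bound without invoking Theorem~\ref{T4.4} at this stage, which makes the argument a little more self-contained. For (iii), the paper simply says the proof is similar, whereas you give an explicit reduction by splitting at an interior point $e$ and combining (i) and (ii); your route requires the auxiliary additivity $\var_{(c,d)}f=\var_{(c,e]}f+\var_{[e,d)}f$, which you correctly derive from Theorem~\ref{T4.4} by passing to the limit in $\var_{c+\delta}^{\,d-\delta}f=\var_{c+\delta}^{\,e}f+\var_e^{\,d-\delta}f$. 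Both treatments of (iii) are valid; yours is more transparent, the paper's is shorter.
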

\begin{proof}
The existence of all the needed limits follows by Lemma~\ref{limits}.

\smallskip

Assume that $f\in BV([c,d),X)$ and let $\eps>0$ and
$D=\{\alpha_0,\alpha_1,\dots,\alpha_{m+1}\}\in\mathcal D[c,d]$ be given.
We can choose $\xi\in[c,d]$ in such a way that
\[
   \alpha_m<\xi<d \quad\mbox{and}\quad \|f(d-)-f(\xi)\|_X<\eps.
\]
Consequently,
\begin{align*}
    &\sum_{j=1}^{m{+}1}\|f(\alpha_j)-f(\alpha_{j-1})\|_X
   \\
    &\qquad
     \le\sum_{j=1}^{m}\|f(\alpha_j)\,{-}\,f(\alpha_{j-1})\|_X+\|f(\xi)\,{-}\,f(\alpha_m)\|_X
     +\|f(d-)\,{-}\,f(\xi)\|_X+\|\Delta^-f(d)\|_X
   \\
    &\qquad<\var_c^{\,\xi}f+\eps+\|\Delta^-f(d)\|_X
     \le\var_{[c,d)}f+\eps+\|\Delta^-f(d)\|_X.
\end{align*}
As $D\in\mathcal D[c,d]$ and $\eps>0$ were arbitrarily chosen, we conclude that
\begin{equation}\label{r4.1}
   \var_c^{\,d} f\le\var_{[c,d)}f+\|\Delta^-f(d)\|_X.
\end{equation}

On the other hand, for any $\delta>0$ we have
\[
   \|f(d)-f(d-\delta)\|_X\le\var_{d-\delta}^{\,d} f=\var_c^{\,d}f-\var_c^{d-\delta}f.
\]
Hence, letting $\delta\to 0+$ we get
\[
   \|\Delta^-f(d)\|_X
   \le\var_c^{\,d}f-\lim_{\delta\to 0+}\var_c^{d-\delta}f=\var_c^{\,d}f-\var_{[c,d)}f
\]
wherefrom, due to \eqref{r4.1}, we conclude that
$\dis\var_c^{\,d}f=\var_{[c,d)}f+\|\Delta^-f(d)\|_X.$ This completes the proof of (i).

\smallskip

Similarly, we can prove the assertions (ii) and (iii).
\end{proof}

\smallskip

\begin{corollary}\label{C4.5}
Let $f{:}\,\ab\,{\to}\,X$ and $c,d\in\ab,$ with $c<d.$ Then the following assertions
are equivalent:
\begin{enumerate}[{\rm(i)}]
  \item  $f\in BV([c,d],X),$
  \item  $f\in BV((c,d],X),$
  \item  $f\in BV([c,d),X),$
  \item  $f\in BV((c,d),X).$
\end{enumerate}
\end{corollary}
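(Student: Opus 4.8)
The plan is to prove the four equivalences by arranging them around statement~(i): I will show that (i) implies each of (ii)--(iv), and that each of (ii)--(iv) in turn implies (i). Once this is done, every one of the four statements is equivalent to (i), and hence they are all equivalent to one another.

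The implication from (i) to the other three requires no computation. The intervals $(c,d]$, $[c,d)$ and $(c,d)$ are all subintervals of $[c,d]$, so the Proposition on monotonicity of the variation under inclusion immediately yields $f\in BV(J,X)$ for each such $J$ whenever $f\in BV([c,d],X)$. This establishes (ii), (iii) and (iv) at once.

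For the converse directions I will invoke Theorem~\ref{vars}. Assuming (iii), that is $f\in BV([c,d),X)$, part~(i) of that theorem asserts that $f(d-)$ exists and that $\var_c^{\,d}f=\var_{[c,d)}f+\|\Delta^-f(d)\|_X$. The right-hand side is finite: $\var_{[c,d)}f<\infty$ by hypothesis, while $\|\Delta^-f(d)\|_X=\|f(d)-f(d-)\|_X$ is the norm of a genuine element of $X$ once $f(d-)$ is known to exist. Hence $\var_c^{\,d}f<\infty$, which is precisely (i). The same reasoning, applied to parts~(ii) and~(iii) of Theorem~\ref{vars}, shows that the hypotheses $f\in BV((c,d],X)$ and $f\in BV((c,d),X)$ likewise force $\var_c^{\,d}f<\infty$; thus (ii)$\,\Rightarrow\,$(i) and (iv)$\,\Rightarrow\,$(i) as well.

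I do not anticipate a genuine obstacle, since the analytic content of the corollary has effectively been packaged into Theorem~\ref{vars}. The only point demanding care is checking that each jump term is finite, and this reduces to the existence of the relevant one-sided limit, which is guaranteed by Theorem~\ref{vars} and, underneath it, by Lemma~\ref{limits}. The corollary is therefore a direct bookkeeping consequence of combining the Proposition with Theorem~\ref{vars} in the star pattern described above.
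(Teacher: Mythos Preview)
Your proposal is correct and matches the paper's intended argument: the corollary is stated in the paper without proof, immediately after Theorem~\ref{vars}, precisely because it follows from that theorem (for the backward implications) together with the monotonicity Proposition (for the forward ones), exactly as you describe. There is nothing to add or correct.
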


\smallskip

\begin{remark}\label{P4.5}
In view Theorem~\ref{vars}, we can also observe that for $f{:}\,\ab\,{\to}\,X$ and $c\in\ab,$
we have
\[
   \lim_{\delta\to 0+}\var_{\,c{-}\delta}^{\,c{+}\delta}f=\|\Delta^-f(c)\|_X+\|\Delta^+f(c)\|_X
\]
provided the one-sided limits exist at the point $c$ (see \cite[Proposition I.2.8]{H1}).
Furthermore, by Theorem \ref{vars}, if $f\in BV(\ab,X)\cap C(\ab,X),$ then
\[
   \var_{[c,d)}f=\var_{(c,d)}f=\var_{(c,d]}f=\var_c^{\,d}f
   \quad\mbox{for \ } c,\,d\in\ab \quad\mbox{such that \ } c<d.
\]
\end{remark}

\smallskip

We need to extend the notion of a variation on intervals to elementary sets.

\smallskip

\begin{definition}
Let $E\subset\R$ be bounded. We say that $E$ is an {\em elementary set} if it is
a finite union of intervals.

\smallskip

A collection of intervals $\{J_k{:}\,k=1,\dots,m\}$ is called a {\em minimal
decomposition} of $E$ if $\dis E=\bigcup_{k=1}^m J_k$ and the union $J_k\cup J_\ell$
is not an interval whenever $k\ne\ell.$

\smallskip

If $S\subseteq\R,$ then $\mathcal E(S)$ stands for the set of all elementary subsets of $S.$
\end{definition}

\smallskip

Note that the minimal decomposition of an elementary set is uniquely determined. Moreover,
the intervals of such decomposition are pairwise disjoint.

\smallskip

Having this in mind, we extend the notion of a variation over elementary sets as follows.

\smallskip

\begin{definition}\label{D4.8}
Given a function $f{:}\,\ab\,{\to}\,X$ and an elementary subset $E$ of $\ab,$ the variation
of $f$ over $E$ is
\[
   \var(f,E)=\sum_{k=1}^m\var_{J_k}f,
\]
where $\{J_k{:}\,k=1,\dots,m\}$ is the minimal decomposition of $E.$
\end{definition}

\smallskip

It is worth highlighting that if $f\in BV(\ab,X)\cap C(\ab,X),$ then
$\var(f,\,\cdot\,)$ defines a finitely additive measure on $\mathcal E(\ab).$
More precisely, we have
\[
    \var(f,E)\le\var_a^bf  \quad\mbox{for any \ } E\in\mathcal{E}(\ab)
\]
and
\[
   \var(f,E_1\cup E_2)=\var(f,E_1)+\var(f,E_2)
\]
whenever $E_1,\,E_2\in\mathcal E(\ab)$ and $E_1\cap E_2=\emptyset.$

\smallskip

\begin{remark}\label{P4.9}
Let us note that in \cite{G} Definition~\ref{var-J} is applied also to arbitrary
subsets $E$ of $\ab.$ Unfortunately, such a definition is not convenient
for our purposes, as the variation would lose the additivity property even
for continuous functions. Indeed, let $a<c<d<b$ and $E=[a,c]\cup[d,b].$  Then,
according to such a definition we would have
\[
    \var_Ef\ge\var_a^cf+\var_d^bf+\|f(d)-f(c)\|>\var_{[a,c]}f+\var_{[d,b]}f
\]
whenever $f(d)\ne f(c).$ This is why for elementary subsets of $\ab$ we will define
the variation in a different way than that used by Gordon in \cite{G}.
\end{remark}

\smallskip

The following lemma will give us the crucial tool for proving our main result.
It is analogous to Lemma from \cite{Lewin}. However, instead of the Lebesgue measure
it works with the variation of the given function over elementary sets. We say that
a sequence $\{A_n\}$ of subsets of $\ab$ is {\em contracting} if $A_{n+1}\subseteq A_n$
holds for $n\in\N.$

\smallskip

\begin{lemma}\label{measure}
Let $f\in BV(\ab,X)\cap C(\ab,X)$ and let $\{A_n\}$ be a contracting sequence
of subsets of $\ab$ such that $\dis\bigcap_n A_n=\emptyset.$ For $n\in\N$
put
\begin{equation}\label{alpha}
   v_n=\sup\{\,\var(f,E){:}\,E\in\mathcal E(A_n)\,\}.
\end{equation}
Then \quad $\lim_{n\to\infty}v_n=0.$
\end{lemma}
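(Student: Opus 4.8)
The plan is to argue by contradiction, exploiting that, since $f$ is continuous and of bounded variation, $\var(f,\cdot)$ behaves like a finitely additive, atomless set function on $\mathcal E(\ab)$. First I would record the elementary facts. The sequence $\{v_n\}$ is nonincreasing, because $A_{n+1}\subseteq A_n$ forces $\mathcal E(A_{n+1})\subseteq\mathcal E(A_n)$, and it is bounded below by $0$; hence $v:=\lim_n v_n$ exists with $v\ge 0$, and I assume $v>0$. By Theorem~\ref{vars} and Remark~\ref{P4.5}, endpoints carry no variation, so for any compact elementary $E\subseteq\ab$ the complement $\ab\setminus E$ is again elementary and $\var(f,E)+\var(f,\ab\setminus E)=\var_a^b f$; moreover every elementary set can be inner-approximated in $\var(f,\cdot)$ by a compact one. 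For each $n$ I would then fix a compact elementary $E_n\subseteq A_n$ with $\var(f,E_n)>v_n-v\,2^{-n-1}\ge v-v\,2^{-n-1}$.

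The core is a bisection argument in the spirit of \cite{Lewin}, with the Lebesgue measure replaced throughout by $\var(f,\cdot)$. Writing $c(S):=\sup\{\var(f,E):E\in\mathcal E(S)\}$ for the inner content, the key structural fact, a consequence of finite additivity and atomlessness, is that splitting an interval $I$ into halves $L,R$ yields $c(A_n\cap I)=c(A_n\cap L)+c(A_n\cap R)$ (the inequality $\le$ by partitioning an elementary subset of $A_n\cap I$, the inequality $\ge$ by uniting elementary subsets of the two halves). Starting from $I_0=\ab$, where $c(A_n\cap I_0)=v_n\ge v$ for every $n$, I would inductively bisect: given a closed interval $I_{k-1}$ with $c(A_n\cap I_{k-1})\ge v\,2^{-(k-1)}$ for all $n$, split it into $L,R$; since $n\mapsto c(A_n\cap L)$ and $n\mapsto c(A_n\cap R)$ are nonincreasing (by $A_{n+1}\subseteq A_n$), the additivity above lets me pass to the limit in $n$ and select the half $I_k$ on which $c(A_n\cap I_k)\ge v\,2^{-k}$ persists for every $n$. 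This produces nested closed intervals $I_0\supseteq I_1\supseteq\cdots$ of lengths tending to $0$, so $\bigcap_k I_k=\{x_0\}$, and with $c(A_n\cap I_k)>0$, hence $A_n\cap I_k\ne\emptyset$, for all $n,k$.

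The remaining step is to convert the persistence of positive content around $x_0$ into a contradiction with $\bigcap_n A_n=\emptyset$, and this is the step I expect to be the main obstacle. Complementarily one has that $\{\ab\setminus E_n\}$ is a relatively open cover of $\ab$ (because $E_n\subseteq A_n$ and $\bigcap_nA_n=\emptyset$), so compactness gives $\bigcap_{n\le N}E_n=\emptyset$ for some $N$; but a direct estimate of $\var(f,\bigcap_{n\le N}E_n)$ through subadditivity over the complements loses a term of order $N\,(\var_a^b f-v)$ and is useless, exactly as in the measure-theoretic proof of continuity from below of the outer measure. The genuine difficulty is that positive inner content of $A_n$ in every $I_k$ does \emph{not} by itself force $x_0\in A_n$ (a punctured neighbourhood of $x_0$ would satisfy the former yet violate the latter). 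Resolving this reconciliation — bridging ``$c(A_n\cap I_k)\ge v\,2^{-k}$ for all $n,k$'' to a bona fide common point of all the $A_n$ — by a careful use of the contracting hypothesis together with the compactness of $\ab$ is the crux of the argument, and it is where the elementary treatment must be most delicate.
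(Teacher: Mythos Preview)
Your proposal does not close, and you correctly diagnose where: the bisection produces a point $x_0$ around which every $A_n$ carries positive inner content, but nothing forces $x_0\in A_n$, and the contracting hypothesis alone will not rescue this---the punctured-neighbourhood obstruction you name is genuine. That route, as set up, is a dead end.

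The approach you dismiss in passing is in fact the one that works; the fix is precisely at the estimate you called useless. You bounded the defect of $E_n$ against $\var_a^b f$, which is wasteful. Bound it against $v_n$ instead: for any elementary $M\subseteq A_n\setminus E_n$ one has $M\cup E_n\in\mathcal E(A_n)$ and hence, by additivity, $\var(f,M)\le v_n-\var(f,E_n)<v\,2^{-n-1}$. Now set $H_N=\bigcap_{n\le N}E_n$. For any elementary $E\subseteq A_N$ write $E\setminus H_N=\bigcup_{n\le N}(E\setminus E_n)$; since $A_N\subseteq A_n$ for $n\le N$ (this is where the contracting hypothesis enters), each $E\setminus E_n$ lies in $A_n\setminus E_n$, so $\var(f,E\setminus H_N)<\sum_{n\ge1}v\,2^{-n-1}=v/2$. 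Were $H_N$ empty, every $E\in\mathcal E(A_N)$ would satisfy $\var(f,E)<v/2$, contradicting $v_N\ge v$. Thus each $H_N$ is a nonempty compact set, the sequence $\{H_N\}$ is nested, and Cantor's intersection theorem yields a point in $\bigcap_N H_N\subseteq\bigcap_N A_N$, the desired contradiction. This is essentially the paper's argument; your compact sets $E_n$ were already chosen correctly for it, and the single missing idea was to measure the complement of $E_n$ inside $A_n$ rather than inside $\ab$.
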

\begin{proof} First, notice that, by Definition \ref{var-J}, $v_n=0$ whenever
$A_n=\emptyset.$

\smallskip

Let us assume that $v_n$ does not tend to zero. Since $\{v_n\}$ is
decreasing, this means that there is $\eps>0$ such that
\begin{equation}\label{r4.2}
   v_n>\eps\quad\mbox{for all \ } n\in\N.
\end{equation}
Consequently, for each $n\in\N,$ there is an $E_n\in\mathcal E(A_n)$ such that
\begin{equation}\label{En}
    \var(f,E_n)>v_n-\frac{\eps}{2^n}\,.
\end{equation}
Since $f$ is continuous, we can assume that $E_n$ is closed (cf. Remark~\ref{P4.5}).
Now, let $H_n=\dis\bigcap_{j=1}^nE_j$ for $n\in\N.$ Clearly, $H_n$ is closed and
$H_n\subseteq A_n$ holds for any $n\inn\N.$ We shall show that
\begin{equation*}\label{r4.3}
    H_n\ne\emptyset\quad\mbox{for any \ } n\in\N.
\end{equation*}

\smallskip

Indeed, let $n\inn\N$ and $M\inn\mathcal E(A_n{\setminus}E_n)$ be given. Then
$M\cup E_n\in\mathcal{E}(A_n)$ and, due to the additivity of the variation, we can
see that
\[
   \var(f,M)+\var(f,E_n)=\var(f,M\cup E_n)\le v_n
   \quad\mbox{for all \ } n\in\N.
\]
Therefore, due to \eqref{En}, we have $\var(f,M)<\frac{\eps}{2^n}.$  Obviously,
\[
   E=\bigcup_{j{=}1}^n (E{\setminus}\,E_j)
   \quad\mbox{for any \ } E\in\mathcal{E}(A_n{\setminus}\,H_n),
\]
where $(E\setminus E_j)\in\mathcal{E}(A_n{\setminus}E_j)
\subseteq\mathcal{E}(A_j{\setminus}E_j)$ for  $j=1,\dots,n.$ Hence
\[
   \var(f,E)\le\sum_{j{=}1}^n\var(f,E{\setminus}\,E_j)
            <\sum_{j{=}1}^n\dfrac{\eps}{2^j}<\eps
   \mbox{ \ for every $E\in\mathcal E(A_n{\setminus}\,H_n)$}.
\]
This, together with \eqref{r4.2}, implies that there must exist $E\in\mathcal E(H_n)$
with $\var(f,E)>\eps,$ wherefrom we conclude that $H_n\ne\emptyset.$

Since $\{H_n\}$ is a contracting sequence of non-empty, closed and bounded sets,
using Cantor's intersection theorem we get
\[
   \bigcap_n A_n\supset\bigcap_n H_n\ne\emptyset\,.
\]
Of course, this contradicts our hypothesis $\dis\bigcap_n A_n=\emptyset.$ As a result,
$\dis\lim_{n\to\infty}v_n=0$ and this completes the proof.
\end{proof}

\section{Integration over elementary sets}
To our knowledge, unlike the Lebesgue-Stieltjes integral, up to now the KS-integral
over sets that need not be just the compact intervals has not been discussed
in literature. In the case of elementary subsets of $\ab$ the following definition
turned out to be useful for our purposes.

\skipaline

\begin{definition}\label{D5.1}
Let $F{:}\,\ab\,{\to}\,L(X),$ \ $g{:}\,\ab\,{\to}\,X$ and an elementary subset $E$
of $\ab$ be given. The Kurzweil-Stieltjes integral (or shortly KS-integral) of $g$
with respect to $F$ over $E$ is given by
\[
\int_E\dd[F]g=\int_a^b\dd[F]\,(g\,\chi_E)
\]
provided the integral on the right-hand side exists.

\smallskip

Symmetrically, we define the KS-integral of $F$ with respect to $g$ over $E$ by
\[
\int_EF\dd[g]=\int_a^b(F\,\chi_E)\,\dd[g]
\]
provided the integral on the right-hand side exists.
\end{definition}

\smallskip

According to Definition~\ref{D5.1} the existence of the integral $\dis\int_E\dd[F]\,g$
means (see Section~1) that there is an $I\in X$ such that for every $\eps\,{>}\,0$
we can find a gauge $\delta$ on $\ab$ such that
\[
   \Big\|S(\dd F,g\,\chi_E,P)-I\Big\|_X<\eps
   \quad\mbox{for all \ $\delta$-fine tagged divisions $P$ of $\ab$}.
\]

\smallskip

Due to Definition~\ref{D5.1} the basic properties of the KS-integral mentioned below
are immediate consequences of what is known for the abstract Kurzweil-Stieltjes integral.

\smallskip

\begin{proposition}\label{linear}
Let $E$ be an elementary subset of $\,\ab.$ Assume that $F{:}\ab{\to}L(X)$
and \ $g_{j}\,{:}\,\ab\,{\to}X,$ $j=1,2,$ are such that the integrals
\[
    \int_E\dd[F]\,g_1 \quad\mbox{and}\quad \int_E\dd[F]\,g_2
\]
exist. Then the integral \ $\dis\int_E\dd[F]\,(c_1\,g_1+c_2\,g_2)$ \ exists and
\[
  \int_E\dd[F]\,(c_1\,g_1+c_2\,g_2)=c_1\int_E\dd[F]\,g_1+c_2\int_E\dd[F]\,g_2
  \quad\mbox{for every \ } c_1,c_2\in\R.
\]

Symmetrically, if $F_j\,{:}\,\ab\,{\to}\,L(X),$ $j=1,2,$ and $g\,{:}\,\ab\,{\to}\,X$
are such that the integrals
\[
    \int_E\dd[F_1]\,g \quad\mbox{and}\quad \int_E\dd[F_2]\,g
\]
exist, then the integral \ $\dis\int_E\dd[c_1\,F_1+c_2\,F_2]\,g$ \ exists and
\[
  \int_E\dd[c_1\,F_1+c_2\,F_2]\,g=c_1\int_E\dd[F_1]\,g+c_2\int_E\dd[F_2]\,g
  \quad\mbox{for all \ } c_1,c_2\in\R.
\]
\end{proposition}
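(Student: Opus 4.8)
The plan is to reduce both assertions directly to Definition~\ref{D5.1} together with the linearity of the ordinary abstract KS-integral over the whole interval $\ab,$ which is among the basic properties recalled at the end of Section~2. No genuinely new argument is needed; the only thing to check is a pointwise algebraic identity for products with the characteristic function $\chi_E.$

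For the first assertion, I would start by applying Definition~\ref{D5.1} to rewrite
\[
   \int_E\dd[F]\,(c_1\,g_1+c_2\,g_2)=\int_a^b\dd[F]\,\big((c_1\,g_1+c_2\,g_2)\,\chi_E\big).
\]
Since $\chi_E$ is scalar valued, the pointwise identity
\[
   (c_1\,g_1+c_2\,g_2)\,\chi_E=c_1\,(g_1\,\chi_E)+c_2\,(g_2\,\chi_E)
\]
holds on $\ab.$ By hypothesis the integrals $\int_a^b\dd[F]\,(g_1\,\chi_E)$ and $\int_a^b\dd[F]\,(g_2\,\chi_E)$ exist, so the linearity of the abstract KS-integral in its integrand guarantees that the combined integral exists and splits as required. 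Reading the result back through Definition~\ref{D5.1} then yields the claimed formula.

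The second assertion is handled symmetrically, now invoking linearity of the abstract KS-integral in its integrator. Using Definition~\ref{D5.1} one writes
\[
   \int_E\dd[c_1\,F_1+c_2\,F_2]\,g=\int_a^b\dd[c_1\,F_1+c_2\,F_2]\,(g\,\chi_E),
\]
and since the integrals $\int_a^b\dd[F_1]\,(g\,\chi_E)$ and $\int_a^b\dd[F_2]\,(g\,\chi_E)$ exist by assumption, linearity in the integrator produces the splitting, which once more corresponds via Definition~\ref{D5.1} to the stated identity. Note that here, in contrast to the first assertion, the factor $\chi_E$ multiplies the fixed integrand $g$ rather than the varying integrators, so no additional algebraic identity is required.

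There is essentially no obstacle to overcome: the entire content is the observation that multiplication by the fixed scalar function $\chi_E$ commutes with forming scalar linear combinations, so linearity over $E$ is inherited directly from linearity over $\ab.$ The only point requiring a little care is to record that the existence of each integral on the right-hand side is exactly what the hypotheses provide, so that the linearity of the abstract KS-integral may legitimately be applied.
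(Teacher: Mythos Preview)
Your proposal is correct and matches the paper's approach exactly: the paper does not give a separate proof of this proposition but simply remarks, immediately before its statement, that ``due to Definition~\ref{D5.1} the basic properties of the KS-integral mentioned below are immediate consequences of what is known for the abstract Kurzweil-Stieltjes integral.'' Your write-up spells out precisely this reduction via the pointwise identity $(c_1 g_1+c_2 g_2)\chi_E=c_1(g_1\chi_E)+c_2(g_2\chi_E)$ and the known linearity of the KS-integral over $\ab$ in each argument.
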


\smallskip

\begin{remark}
Note that, if $E$ is an elementary subset of $\ab$ and $g\,{:}\,\ab\,{\to}\,X$
is a function such that $g=0$ on $E,$ then
\[
   \int_{E}\dd[F]\,g=0
\]
for every $F\,{:}\,\ab\,{\to}\,L(X).$

Consequently, due to Proposition \ref{linear}, if $F\,{:}\,\ab\,{\to}\,L(X)$ and
$g\,{:}\,\ab\,{\to}\,X$ are such that $\dis\int_{E}\dd[F]\,g$ exists, then for any
function $h\,{:}\,\ab\,{\to}\,X$ which coincides with $g$ on $E,$ we have
\[
   \int_{E}\dd[F]\,h=\int_{E}\dd[F]\,g.
\]
\end{remark}

\smallskip

\begin{theorem}\label{add}
Let $E_{1}$ and $E_{2}$ be elementary subsets of \ $\ab$ such that
$E_{1}\cap E_{2}=\emptyset.$ Assume that $F\,{:}\,\ab\,{\to}\,L(X)$ and
$g\,{:}\,\ab\,{\to}\,X$ are such that both the integrals
$\dis\int_{E_{j}}\dd[F]\,g,$ $j=1,2,$ exist. Then the integral
$\dis\int_{E_1\cup E_2}\dd[F]\,g$ exists and
\[
   \int_{E_1\cup E_2}\dd[F]\,g=\int_{E_{1}}\dd[F]\,g+\int_{E_{2}}\dd[F]\,g.
\]
\end{theorem}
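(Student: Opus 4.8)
The plan is to reduce the assertion to the linearity of the abstract Kurzweil--Stieltjes integral with respect to its integrand, exploiting the fact that Definition~\ref{D5.1} transfers everything to a single integral over $\ab.$ First I would unfold the definitions: by Definition~\ref{D5.1},
\[
   \int_{E_1\cup E_2}\dd[F]\,g=\int_a^b\dd[F]\,(g\,\chi_{E_1\cup E_2}),
   \qquad
   \int_{E_j}\dd[F]\,g=\int_a^b\dd[F]\,(g\,\chi_{E_j}),\quad j=1,2,
\]
where the latter two integrals exist by hypothesis. (Note also that $E_1\cup E_2$ is again a finite union of intervals, hence an elementary subset of $\ab,$ so the integral on the left-hand side is meaningful in the sense of Definition~\ref{D5.1}.)

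The crucial step, and the only place where the disjointness $E_1\cap E_2=\emptyset$ is actually used, is the pointwise identity
\[
   \chi_{E_1\cup E_2}(t)=\chi_{E_1}(t)+\chi_{E_2}(t)\quad\mbox{for \ }t\inn\ab,
\]
which holds precisely because no point of $\ab$ lies simultaneously in $E_1$ and in $E_2.$ Consequently $g\,\chi_{E_1\cup E_2}=g\,\chi_{E_1}+g\,\chi_{E_2}$ on $\ab.$

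Finally I would invoke the linearity of the abstract KS-integral in its integrand---equivalently, Proposition~\ref{linear} applied on the whole interval with $E=\ab,$ \ $g_1=g\,\chi_{E_1},$ \ $g_2=g\,\chi_{E_2}$ and $c_1=c_2=1$---to conclude that, since both $\int_a^b\dd[F]\,(g\,\chi_{E_j})$ exist, the integral $\int_a^b\dd[F]\,(g\,\chi_{E_1}+g\,\chi_{E_2})$ exists as well and equals their sum. Substituting back through Definition~\ref{D5.1} then yields the claimed identity. The hard part is essentially absent here: the entire content is the observation that disjointness makes the indicator functions additive, after which the statement follows at once from the definition and the linearity already recorded in Proposition~\ref{linear}; the disjointness hypothesis is indispensable, for without it the characteristic function of the union would undercount on the overlap and the additivity would fail.
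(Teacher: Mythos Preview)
Your proof is correct and follows essentially the same route as the paper: both use the disjointness to write $g\,\chi_{E_1\cup E_2}=g\,\chi_{E_1}+g\,\chi_{E_2}$ and then appeal to the linearity of the abstract KS-integral in the integrand (the paper cites \cite[Proposition~6]{Sch1} directly, while you invoke Proposition~\ref{linear} with $E=\ab$, which amounts to the same thing).
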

\begin{proof}
Since $g\,(\chi_{E_{1}\cup E_{2}})=g\,\chi_{E_1}\,{+}\,g\,\chi_{E_{2}},$
by Definition \ref{D5.1} and \cite[Proposition 6]{Sch1} we have
\[
  \int_{E_{1}}\dd[F]\,g+\int_{E_{2}}\dd[F]\,g
  =\int_a^b\dd[F]\,(g\,\chi_{E_1})
   +\int_a^b\dd[F]\,(g\,\chi_{E_{2}})=\int_a^b\dd[F]\,(g\,\chi_{E_1\cup E_2}),
\]
which proves the result.
\end{proof}

\smallskip

The following existence result is a consequence of \cite[Proposition 15]{Sch1}.

\begin{proposition}\label{existence}
Let $F\in BV(\ab,L(X))$ and $g\in G(\ab,X).$ If $E$ is an elementary subset
of $\ab,$ then the integral $\int_E\dd[F]\,g$ exists.
\end{proposition}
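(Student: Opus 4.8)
The plan is to reduce the assertion to the basic existence theorem for the abstract KS-integral cited just above, that is, to the result from \cite[Proposition 15]{Sch1} guaranteeing that $\int_a^b\dd[F]\,h$ exists whenever $F\in BV(\ab,L(X))$ and $h\in G(\ab,X)$. By Definition~\ref{D5.1} we have $\int_E\dd[F]\,g=\int_a^b\dd[F]\,(g\,\chi_E)$, so it suffices to show that $g\,\chi_E\in G(\ab,X)$ and then invoke that theorem with $h=g\,\chi_E$.

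First I would check that the characteristic function $\chi_E$ is regulated. Writing the minimal decomposition $E=\bigcup_{k=1}^m J_k$ into pairwise disjoint intervals, $\chi_E$ is a finite step function: it is piecewise constant and can change value only at the finitely many endpoints of the intervals $J_k$. Consequently the one-sided limits $\chi_E(t+)$ and $\chi_E(t-)$ exist at every relevant $t$, so $\chi_E\in G(\ab,\R)$.

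Next I would use that multiplying an $X$-valued regulated function by a scalar regulated function again yields a regulated function. Since $g\in G(\ab,X)$, for each $t\in[a,b)$ the limit $g(t+)$ exists, and since $\chi_E(t+)$ exists as well, the product admits the right-hand limit $(g\,\chi_E)(t+)=g(t+)\,\chi_E(t+)$; the left-hand limits at each $t\in(a,b\,]$ are handled in the same way. Hence $g\,\chi_E\in G(\ab,X)$.

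With $F\in BV(\ab,L(X))$ and $g\,\chi_E\in G(\ab,X)$, the cited existence result applies and gives the existence of $\int_a^b\dd[F]\,(g\,\chi_E)$, which by Definition~\ref{D5.1} is exactly $\int_E\dd[F]\,g$. There is essentially no obstacle here: the entire content is the elementary observation that $\chi_E$ is regulated and that pointwise multiplication preserves regularity, after which the claim follows from the known existence theorem. Alternatively, one could first establish existence over a single interval and then assemble the result for the general elementary set $E=\bigcup_{k=1}^m J_k$ by the additivity in Theorem~\ref{add}; but the direct argument via regularity of $g\,\chi_E$ is shorter.
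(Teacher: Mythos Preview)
Your proposal is correct and follows essentially the same approach as the paper: the paper's proof consists of the single observation that $g\,\chi_E$ is regulated whenever $g\in G(\ab,X)$ and $E$ is an elementary subset of $\ab$, after which \cite[Proposition 15]{Sch1} applies. You have simply spelled out in detail why $g\,\chi_E$ is regulated, which the paper leaves implicit.
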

\begin{proof} It is enough to observe that $g\,\chi_E\,{:}\,\ab\,{\to}\,X$ is
a regulated function whenever $g\in G(\ab,X)$ and $E$ is an elementary subset
of $\ab.$
\end{proof}

\smallskip

The estimates presented in the sequel are, in a sense, the analogues of \cite[Proposition 10]{Sch1}.

\begin{theorem}\label{int-inequality}
Let $J$ be a subinterval of $\ab$ and let $c=\inf J$ and $d=\sup J$, with $c<d$.
Assume that $F\,{:}\,\ab\,{\to}\,L(X),$ with $F\in BV(J,L(X)),$ and
$g\,{:}\,\ab\,{\to}\,X$ are such that the integral $\dis\int_{J}\dd[F]\,g$
exists.
\begin{enumerate}[{\rm (i)}]
\item If $J=(c,d),$ then
$\dis\Big\|\int_{(c,d)}\dd[F]\,g\Big\|_X\le\var_{(c,d)}F\,\Big(\sup_{t\in(c,d)}\|g(t)\|_X\Big).$

\item If $J=[c,d)$ and $F(c-)$ exists, then
\begin{equation*}\label{int[c,d)}
  \Big\|\int_{[c,d)}\dd[F]\,g\Big\|_X
  \le\var_{[c,d)}F\,\Big(\sup_{t\in[c,d)}\|g(t)\|_X\Big)
                                  +\|\Delta^-F(c)\|_{L(X)}\,\|g(c)\|_X.
\end{equation*}

\item If $J=(c,d]$ and $F(d+)$ exists, then
\begin{equation*}\label{int(c,d]}
  \Big\|\int_{(c,d]}\dd[F]\,g\Big\|_X
  \le\var_{(c,d]}F\,\Big(\sup_{t\in(c,d]}\|g(t)\|_X\Big)
                                      +\|\Delta^+F(d)\|_{L(X)}\,\|g(d)\|_X.
\end{equation*}

\item If $J=[c,d],$ and both $F(c-),\,F(d+)$ exist, then
\begin{align*}\label{int[c,d]}
  &\hskip-6mm\Big\|\int_{[c,d]}\hskip-1mm\dd[F]\,g\Big\|_X
\\[1mm]
  &\le\var_{[c,d]}F\left(\sup_{t\in[c,d]}\|g(t)\|_X\right)
	{+}\,\|\Delta^-F(c)\|_{L(X)}\|g(c)\|_X\,{+}\,\|\Delta^+F(d)\|_{L(X)}\|g(d)\|_X.
\end{align*}
\end{enumerate}
\end{theorem}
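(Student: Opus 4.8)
The plan is to estimate the Kurzweil--Stieltjes sums $S(\dd F,g\,\chi_J,P)$ directly for a suitably chosen gauge, rather than invoking the compact-interval estimate \cite[Proposition 10]{Sch1} as a black box. The starting observation is that in
\[
   S(\dd F,g\,\chi_J,P)=\sum_{j=1}^m[F(\alpha_j)-F(\alpha_{j-1})]\,(g\,\chi_J)(\tau_j)
\]
only the terms whose tag $\tau_j$ lies in $J$ contribute, since $(g\,\chi_J)(\tau_j)=0$ otherwise. Thus the whole estimate reduces to controlling $\sum_{\tau_j\in J}\|F(\alpha_j)-F(\alpha_{j-1})\|_{L(X)}\,\|g(\tau_j)\|_X$, and the task is to show that the relevant $F$-increments can be organized into (a sub-collection of the consecutive increments of) a generalized division of $J$, so that Definition~\ref{var-J} produces the factor $\var_J F$.

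For assertion~(i), where $J=(c,d),$ I would choose a gauge $\delta$ with $\delta(\tau)<\min\{\tau-c,\,d-\tau\}$ for every $\tau\in(c,d).$ Then every subinterval $[\alpha_{j-1},\alpha_j]$ of a $\delta$-fine division whose tag satisfies $\tau_j\in(c,d)$ is forced to lie inside $(c,d).$ Such subintervals are pairwise non-overlapping, so their endpoints form a generalized division of $(c,d),$ whence $\sum_{\tau_j\in(c,d)}\|F(\alpha_j)-F(\alpha_{j-1})\|_{L(X)}\le\var_{(c,d)}F.$ Bounding each $\|g(\tau_j)\|_X$ by $\sup_{t\in(c,d)}\|g(t)\|_X$ and passing to the limit (the integral exists by hypothesis) gives the claimed inequality.

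For assertions~(ii)--(iv) the only new phenomenon is the treatment of the endpoint(s) belonging to $J.$ Consider $c\in J$ (cases (ii) and (iv)); the subinterval $[\alpha_{i-1},\alpha_i]$ carrying the tag $\tau_i=c$ contributes $[F(\alpha_i)-F(\alpha_{i-1})]\,g(c),$ which I would split as
\[
   [F(\alpha_i)-F(c)]\,g(c)+[F(c)-F(\alpha_{i-1})]\,g(c).
\]
I refine the gauge at $c$ so that $\alpha_{i-1}$ is as close to $c$ from the left as desired; since $F(c-)$ exists by assumption, $\|F(c)-F(\alpha_{i-1})\|_{L(X)}\to\|\Delta^-F(c)\|_{L(X)},$ producing exactly the extra term $\|\Delta^-F(c)\|_{L(X)}\,\|g(c)\|_X.$ The remaining piece $[F(\alpha_i)-F(c)]\,g(c),$ together with the interior increments from the argument of~(i), now has all its break-points in $[c,d)$ with $c$ itself among them; these form a generalized division of $[c,d),$ so, weighting each increment by $\sup_{t\in[c,d)}\|g(t)\|_X,$ the corresponding sum is at most $\var_{[c,d)}F\cdot\sup_{t\in[c,d)}\|g(t)\|_X.$ The endpoint $d$ in cases (iii) and (iv) is handled symmetrically, splitting the $d$-tagged contribution into an inner half $F(d)-F(\beta)$ absorbed into the variation and an outer half $F(\gamma)-F(d)\to\Delta^+F(d)$ (using the existence of $F(d+)$); the resulting bounds then combine additively to yield~(iv).

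The main obstacle I anticipate is bookkeeping rather than a conceptual difficulty: one must verify carefully that the retained $F$-increments --- the interior ones from~(i) together with the inner halves $F(\alpha_i)-F(c)$ and $F(d)-F(\beta)$ of the boundary subintervals --- genuinely arise as consecutive increments of a single generalized division of $J,$ so that Definition~\ref{var-J} applies and yields $\var_J F$ and not the larger $\var_c^{\,d} F$ (recall from Theorem~\ref{vars} that $\var_c^{\,d} F$ exceeds $\var_J F$ precisely by the jump terms at the excluded endpoints, which is exactly why the hypotheses request the existence of $F(c-)$ and $F(d+)$ only for the endpoints that \emph{belong} to $J$). A minor point to check is the possibility that $c$ (or $d$) coincides with a division point and is simultaneously the tag of two adjacent subintervals; both resulting contributions are absorbed by the same splitting, so no essential change occurs.
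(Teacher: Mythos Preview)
Your proposal is correct and follows essentially the same approach as the paper: both restrict the gauge so that subintervals with interior tags lie inside $J$ (the paper phrases this as $\delta^*(t)<\min\{|t-c|,|t-d|\}$ for $t\notin\{c,d\}$, which has the same effect), and both split the boundary contributions into an ``inner half'' absorbed into $\var_J F$ and an ``outer half'' controlled via the one-sided limits $F(c-)$, $F(d+)$. The only cosmetic difference is that the paper normalizes by assuming $c$ and $d$ are division points (which its gauge makes legitimate), whereas you handle the general configuration directly; the underlying bookkeeping is the same.
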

\begin{proof} Since the integral $\dis\int_J\dd[F]\,g$ exists, for any
$\eps>0$ there is a gauge $\delta$ on $\ab$ such that
\begin{equation}\label{intJ}
   \left\|S(\dd F,g\,\chi_J,P)-\int_J\dd[F]\,g\right\|_X<\eps
\end{equation}
for every $\delta$-fine tagged division $P$ of $\ab.$

\smallskip

Let $\delta^*$ be a gauge on $\ab$ such that $\delta^*\le\delta$ on $\ab$ and
\begin{equation}\label{c,d-special}
  \delta^*(t)<\min\{|t-c|,|t-d|\}
  \quad\mbox{for \ }t\in\ab{\setminus}\,\{c,d\}.
\end{equation}
Considering a $\delta^*$-fine tagged division $P$ of $\ab,$ put
$P_J\,{=}\,\{(\tau,I)\in P{:}\,I\,{\cap}\,[c,d]\,{\ne}\,\emptyset\}.$
Let $a\,{<}\,c\,{<}\,d\,{<}\,b.$ Denoting
$P_J\,{=}\,\{(\xi_j,[\beta_{j{-}1},\beta_j]){:}\,j\,{=}\,1,\dots,\nu(P_J)\},$
without loss of generality we may assume
\begin{equation}\label{P}
   \beta_0<c=\beta_1<\beta_2<\dots<\beta_{\nu(P_J)-1}=d<\beta_{\nu(P_J)}.
\end{equation}
It is easy to see that $\xi_1=\xi_2=c$ and $\xi_{\nu(P_J)}=\xi_{\nu(P_J){-}1}=d$
must hold.

\skipaline

\noindent (i)\quad For $J=(c,d),$ we have
\[
  \begin{split}
    S(\dd F,g\,\chi_{(c,d)},P)&=S(\dd F,g\,\chi_{(c,d)},P_J)
   \\
	&=\sum_{j=1}^{\nu(P_J)}[F(\beta_j)\,{-}\,F(\beta_{j-1})](g\,\chi_{(c,d)})(\xi_j)
     =\hskip-1mm\sum_{j=3}^{\nu(P_J)-2}[F(\beta_j)\,{-}\,F(\beta_{j-1})]g(\xi_j).
  \end{split}
\]
Having in mind that $\{\beta_2,\dots,\beta_{\nu(P_J)-2}\}$ is a division of $(c,d),$
we conclude that
\[
  \|S(\dd F,g\,\chi_{(c,d)},P)\|_X\le\var_{(c,d)}F\,\Big(\sup_{t\in(c,d)}\|g(t)\|_X\Big)
\]
and consequently, due to \eqref{intJ},
\[
 \Big\|\int_{(c,d)}\dd[F]\,g\Big\|_X
 \le\var_{(c,d)}F\,\Big(\sup_{t\in (c,d)}\|g(t)\|_X\Big)+\eps.
\]
Since $\eps>0$ can be arbitrary, the desired inequality is true.

\skipaline

\noindent (ii)\quad Let $J=[c,d)$ and let $g(c)\ne 0.$ Since $F(c-)$ exists, we
can choose $\eta>0$ such that
\[
   \|F(c-)-F(s)\|_{L(X)}<\frac{\eps}{\|g(c)\|_X} \quad\mbox{for \ }c-\eta<s<c.
\]
Moreover, we can assume that the gauge $\delta^*$ as in \eqref{c,d-special} is such
that $\delta^*(c)<\eta.$

Given a $\delta^*$-fine tagged division $P$ of $\ab$ and considering $P_J$ as in \eqref{P},
we have
\[
   S(\dd F,g\,\chi_{[c,d)},P)=\sum_{j=2}^{\nu(P_J)-2}
                       [F(\beta_j)-F(\beta_{j-1})]\,g(\xi_j)+[F(c)-F(\beta_0)]\,g(c).
\]
Hence
\begin{align*}
    \|S(\dd F,g\,\chi_{[c,d)},P)\|_X
    &\le\var_{[c,d)}F\,\Big(\sup_{t\in[c,d)}\|g(t)\|_X\Big)
                                                +\|F(c)-F(\beta_0)\|_{L(X)}\,\|g(c)\|_X
  \\[2mm]
	&\le\var_{[c,d)}F\,\Big(\sup_{t\in[c,d)}\|g(t)\|_X\Big)
                                                +\|\Delta^-F(c)\|_{L(X)}\,\|g(c)\|_X
  \\
	&\qquad\quad+\|F(c-)-F(\beta_0)\|_{L(X)}\,\|g(c)\|_X
  \\[2mm]
	&\le\var_{[c,d)}F\,\Big(\sup_{t\in [c,d)}\|g(t)\|_X\Big)
                                            +\|\Delta^-F(c)\|_{L(X)}\,\|g(c)\|_X+\eps.
\end{align*}
In view of this and applying \eqref{intJ}, for $J=[c,d)$ we obtain
\[
  \Big\|\int_{[c,d)}F\,\dd[g]\Big\|_X
  \le\var_{[c,d)}F\Big(\sup_{t\in[c,d)}\|g(t)\|_X\Big)
                                      +\|\Delta^-F(c)\|_{L(X)}\,\|g(c)\|_X+2\,\eps,
\]
which proves (ii).

\skipaline

\noindent
(iii)\quad In order to prove the desired inequality for $J=(c,d],$ assume $g(d)\ne 0$
and choose $\gamma>0$ such that
\[
   \|F(s)-F(d+)\|_{L(X)}<\frac{\eps}{\|g(d)\|_X}\quad\mbox{for \ }d<s<d+\gamma.
\]
Without loss of generality we can assume that the gauge $\delta^*$ in \eqref{c,d-special}
is such that $\delta^*(d)<\gamma.$  For a $\delta^*$-fine tagged division $P$ of $\ab,$
considering $P_J$ as in \eqref{P}, we have
\[
  S(\dd F,g\,\chi_{(c,d]},P)
  =\sum_{j=3}^{\nu(P_J)-1}
             [F(\beta_j)-F(\beta_{j-1})]\,g(\xi_j)+[F(\beta_{\nu(P_J)})-F(d)]\,g(d),
\]
wherefrom
\begin{align*}
   \|S(\dd F,g\,\chi_{(c,d]},P)\|_X
   &\le\var_{(c,d]}F\Big(\sup_{t\in(c,d]}\,\|g(t)\|_X\Big)+\|\Delta^
                                                          +F(d)\|_{L(X)}\,\|g(d)\|_X
 \\
   &\qquad\quad+\|F(\beta_{\nu(P_J)}-F(d+))\|_{L(X)}\|g(d)\|_X
 \\
   &\le\var_{(c,d]}F\Big(\sup_{t\in(c,d]}\,\|g(t)\|_X\Big)
                                           +\|\Delta^+F(d)\|_{L(X)}\,\|g(d)\|_X+\eps.
\end{align*}
Therefore, by \eqref{intJ} we obtain
\[
  \Big\|\int_{(c,d]}\dd[F]\,g\Big\|_X\le
  \var_{(c,d]}F\Big(\sup_{t\in(c,d]}\,\|g(t)\|_X\Big)+\|\Delta^+F(d)\|_{L(X)}\,\|g(d)\|_X+2\,\eps
\]
and (iii) follows.

\skipaline

\noindent(iv)\quad Similarly to the above, let us consider a $\delta^*$-fine tagged division $P$
of $\ab,$ where the gauge $\delta^*$ satisfies \eqref{c,d-special}, $\delta^*(c)<\eta$
and $\delta^*(d)<\gamma.$ We can see that
\[
\begin{split}
   S(\dd F,g\,\chi_{[c,d]},P)
   &=\sum_{j=2}^{\nu(P_J)-1}[F(\beta_j)-F(\beta_{j-1})]\,g(\xi_j)
  \\
   &\qquad +[F(c)-F(\beta_0)]\,g(c)+[F(\beta_{\nu(P_J)})-F(d)]\,g(d),
\end{split}
\]
where $P_J$ is defined as in \eqref{P}. Thus
\[
\begin{split}
    &\|S(\dd F,g\,\chi_{[c,d]},P)\|_X
  \\&\quad
     {\le}\var_{[c,d]}F\Big(\sup_{t\in [c,d]}\,\|g(t)\|_X\Big)
       \,{+}\,\|\Delta^-F(c)\|_{L(X)}\,\|g(c)\|_X
       \,{+}\,\|\Delta^+F(d)\|_{L(X)}\,\|g(d)\|_X
       \,{+}\,2\,\eps,
\end{split}
\]
which, together with \eqref{intJ}, leads to the desired inequality.

\smallskip

The case when $c=a$ and/or $d=b$ can be proved in a similar way, recalling that,
by convention, $\Delta^-F(a)=\Delta^+F(b)=0.$
\end{proof}

\smallskip

In order to better characterize the integral over elementary sets, first we investigate
the integral over arbitrary intervals. The case when $E$ is just one point corresponds
to the following result borrowed from \cite[Lemma 12]{Sch1}.

\smallskip

\begin{proposition}\label{prop-point}
Let $F\in(\mathcal{B})G([a,b],L(X))$ and $g\,{:}\,\ab\,{\to}\,X$ be given. If
\ $\tau\in\ab,$ then the integral $\dis\int_{[\tau]}\dd[F]\,g$ exists and one
of the following equalities is true:
\begin{equation}\label{point}\hskip-1mm
  \left.\begin{array}{l}\dis
    \int_{[a]}\dd[F]\,g=\int_a^b\dd[F]\,(g\,\chi_{[a]})
    =\lim_{t\to a+}F(t)\,g(a)\,{-}\,F(a)\,g(a),
   \\[5mm]\dis
    \int_{[\tau]}\dd[F]\,g=\int_a^b\dd[F]\,(g\,\chi_{[\tau]})
    =\lim_{t\to\tau+}F(t)\,g(\tau)\,{-}\lim_{t\to\tau-}F(t)\,g(\tau)
    \mbox{\ if \ } \tau\in(a,b),
   \\[5mm]\dis
    \int_{[b]}\dd[F]\,g=\int_a^b\dd[F]\,(g\,\chi_{[b]})
    =F(b)\,g(b)\,{-}\lim_{t\to b-}F(t)\,g(b).
\end{array}\hskip-1mm\right\}
\end{equation}
\end{proposition}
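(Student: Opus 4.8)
The plan is to compute the integral $\int_{[\tau]}\dd[F]\,g = \int_a^b\dd[F]\,(g\,\chi_{[\tau]})$ directly from the definition of the KS-integral applied to the function $g\,\chi_{[\tau]}$, which vanishes everywhere except at the single point $\tau$. The key observation is that $g\,\chi_{[\tau]}$ is a finite step function supported on one point, so only those terms of a Riemann–Stieltjes sum whose tag equals $\tau$ can contribute. I would first guess that the integral equals the right-hand side of \eqref{point} and then verify it by estimating the difference $\|S(\dd F, g\,\chi_{[\tau]}, P) - I\|_X$ for suitably fine tagged divisions $P$.

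Let me describe the steps for the interior case $\tau\in(a,b)$; the endpoint cases are analogous (with the conventions at $a$ and $b$). Given $\eps>0$, I would use the hypothesis $F\in(\mathcal B)G(\ab,L(X))$ to choose $\eta>0$ so that $\|F(s)\,g(\tau) - \lim_{t\to\tau+}F(t)\,g(\tau)\|_X<\eps$ whenever $\tau<s<\tau+\eta$, and similarly $\|F(s)\,g(\tau) - \lim_{t\to\tau-}F(t)\,g(\tau)\|_X<\eps$ whenever $\tau-\eta<s<\tau$; note that simple-regularity guarantees precisely the existence of these one-sided limits applied to the vector $g(\tau)$. Next I would take a gauge $\delta$ with $\delta(\tau)<\eta$ and $\delta(t)<|t-\tau|$ for $t\ne\tau$, forcing any $\delta$-fine tagged division to have $\tau$ as a tag. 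In such a division $P=\{(\xi_j,[\beta_{j-1},\beta_j])\}$, the only nonzero summands of $S(\dd F, g\,\chi_{[\tau]}, P)$ come from the (at most two) subintervals tagged by $\tau$, one lying to the left of $\tau$ and one to the right, whose contribution telescopes to
\[
   [F(\beta_{\mathrm{right}})-F(\beta_{\mathrm{left}})]\,g(\tau),
\]
with $\beta_{\mathrm{left}}<\tau<\beta_{\mathrm{right}}$ and these endpoints within $\eta$ of $\tau$.

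Then the estimate of this sum against $\lim_{t\to\tau+}F(t)\,g(\tau)-\lim_{t\to\tau-}F(t)\,g(\tau)$ splits into two terms controlled by the two inequalities above, yielding a bound of $2\eps$. Since $\eps>0$ is arbitrary, the integral exists and equals the claimed value. \emph{The main subtlety} is the bookkeeping of which subintervals are tagged by $\tau$ and hence contribute: one must argue that at most two adjacent subintervals share the tag $\tau$ (one ending at $\tau$, one beginning at $\tau$), and treat carefully the degenerate possibility that only one of them is nondegenerate. The gauge condition $\delta(t)<|t-\tau|$ for $t\ne\tau$ is exactly what guarantees no other tag can ``see'' the point $\tau$, so that $S(\dd F, g\,\chi_{[\tau]}, P)$ reduces to the telescoped expression above. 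At the endpoints $\tau=a$ or $\tau=b$ there is only one subinterval tagged by $\tau$, and the formula collapses to the corresponding line of \eqref{point} using the convention that the ``outer'' limit is replaced by the value $F(a)$ or $F(b)$ itself.
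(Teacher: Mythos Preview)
Your argument is correct: forcing $\tau$ to be a tag via the gauge condition $\delta(t)<|t-\tau|$ for $t\ne\tau$, identifying the (at most two) contributing subintervals, telescoping, and then using the simple-regularity of $F$ applied to the fixed vector $g(\tau)$ is exactly the right computation, and your handling of the endpoint cases is accurate. The paper itself does not supply a proof of this proposition at all; it simply quotes the result from \cite[Lemma~12]{Sch1}, so your direct verification is in fact more than what the paper provides, and it is precisely the kind of argument that underlies the cited lemma.
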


\smallskip

An important question which arises regarding Definition \ref{D5.1} is whether the integral
over compact intervals coincides with the abstract Kurzweil-Stieltjes integral introduced
in Section~1. As we will see, the equality need not be true in general. However, given
functions $F\,{:}\,\ab\,{\to}\,L(X)$ and $g{:}\,\ab\,{\to}\,X,$ the equality
\[
  \int_{[a,b]}\dd[F]\,g=\int_{a}^{b}\dd[F]\,g
\]
holds provided one of the integrals exists.

\smallskip

\begin{theorem}\label{compact}
Let $F\in(\mathcal{B})G([a,b],L(X)),$ \ $g{:}\,\ab\,{\to}\,X$ and $a<c<d<b.$
Then the integral $\dis\int_{[c,d]}\dd[F]\,g$ exists if and only if
$\dis\int_c^{\,d}\dd[F]\,g$ exists. Moreover,
\begin{align}\label{[c,d]}
     \int_{[c,d]}\dd[F]\,g
   &=\int_{c}^{d}\dd[F]\,g+F(c)\,g(c)-\lim_{t\to c-}F(t)\,g(c)
  \\\nonumber
   &\quad+\lim_{t\to d+}F(t)\,g(d)-F(d)\,g(d).
\end{align}
\end{theorem}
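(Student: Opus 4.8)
The plan is to reduce everything to the ordinary KS-integral over adjacent compact subintervals, together with the point formula from Proposition~\ref{prop-point}. By Definition~\ref{D5.1} we have $\int_{[c,d]}\dd[F]\,g=\int_a^b\dd[F]\,h$ with $h:=g\,\chi_{[c,d]}$, so it suffices to analyse $\int_a^b\dd[F]\,h$. First I would invoke the standard additivity of the KS-integral with respect to adjacent subintervals to write, whenever the relevant integrals exist,
\[
   \int_a^b\dd[F]\,h=\int_a^c\dd[F]\,h+\int_c^d\dd[F]\,h+\int_d^b\dd[F]\,h,
\]
and note that $\int_c^d\dd[F]\,h=\int_c^d\dd[F]\,g$ because $h$ coincides with $g$ on $[c,d]$.

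The two outer integrals are the main computation. On $[a,c]$ the function $h$ vanishes on $[a,c)$ and equals $g(c)$ at the right endpoint $c$; hence it coincides there with $g(c)\,\chi_{[c]}$, with $c$ the right endpoint of $[a,c]$. Applying Proposition~\ref{prop-point} on the interval $[a,c]$ (the case $\tau=b$ of \eqref{point}) shows that $\int_a^c\dd[F]\,h$ exists and equals $F(c)\,g(c)-\lim_{t\to c-}F(t)\,g(c)$. Symmetrically, on $[d,b]$ the function $h$ equals $g(d)$ at the left endpoint $d$ and vanishes on $(d,b]$; the case $\tau=a$ of \eqref{point} on $[d,b]$ yields that $\int_d^b\dd[F]\,h$ exists and equals $\lim_{t\to d+}F(t)\,g(d)-F(d)\,g(d)$. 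Here I use that $F\in(\mathcal{B})G(\ab,L(X))$ guarantees the needed one-sided limits (as vectors) and that Proposition~\ref{prop-point} applies after restricting $F$ to the smaller interval.

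Since these two outer integrals always exist, the equivalence of existence decouples to the middle term: if $\int_c^d\dd[F]\,g$ exists, then all three summands exist and forward additivity produces $\int_a^b\dd[F]\,h$, hence $\int_{[c,d]}\dd[F]\,g$; conversely, if $\int_a^b\dd[F]\,h$ exists, then because $\int_a^c\dd[F]\,h$ and $\int_d^b\dd[F]\,h$ exist, the restriction property of the KS-integral forces $\int_c^d\dd[F]\,h=\int_c^d\dd[F]\,g$ to exist as well (cf.~\cite{Sch1}). Substituting the two evaluated outer integrals into the additivity identity and rearranging then gives exactly \eqref{[c,d]}.

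The step I expect to require the most care is the precise evaluation of the outer integrals $\int_a^c\dd[F]\,h$ and $\int_d^b\dd[F]\,h$: one must ensure that the gauge can be chosen so that, in any fine tagged division of $[a,c]$ (resp.~$[d,b]$), the only surviving Riemann-sum term is the one tagged at the endpoint $c$ (resp.~$d$), so that the limiting value is indeed $[F(c)-\lim_{t\to c-}F(t)]\,g(c)$ (resp.~$[\lim_{t\to d+}F(t)-F(d)]\,g(d)$). This is precisely the content of the endpoint cases of Proposition~\ref{prop-point}, so invoking that result handles the obstacle cleanly, while the reverse implication of the existence equivalence rests on the standard restriction property of the KS-integral.
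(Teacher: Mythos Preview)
Your proposal is correct and follows essentially the same route as the paper: split $\int_a^b\dd[F]\,(g\,\chi_{[c,d]})$ additively at $c$ and $d$, identify the middle piece with $\int_c^d\dd[F]\,g$, and evaluate the two outer pieces via the endpoint cases of Proposition~\ref{prop-point} applied on $[a,c]$ and $[d,b]$. The paper states this more tersely, but your added care about the existence equivalence (forward additivity in one direction, the restriction property of the KS-integral from \cite{Sch1} in the other) is exactly what underlies the paper's ``Obviously''.
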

\begin{proof}
Obviously,
\begin{align*}
     \int_a^b\dd[F]\,(g\,\chi_{[c,d]})
   &=\int_a^c\dd[F]\,(g\,\chi_{[c,d]})+\int_c^{\,d}\dd[F]\,(g\,\chi_{[c,d]})
                                                    +\int_d^b\dd[F]\,(g\,\chi_{[c,d]})
  \\
   &=\int_a^c\dd[F]\,(g\,\chi_{[c]})+\int_c^{\,d}\dd[F]\,g+\int_d^b\dd[F]\,(g\,\chi_{[d]}).
\end{align*}
Hence, the proof follows from \eqref{point}.
\end{proof}

\smallskip

\begin{remark}
Recalling that $BV([a,b],X)\subset G([a,b],X),$ formula \eqref{[c,d]} is valid when
$F\in BV([a,b],L(X)).$ In the particular case $X=\R$ this fact corresponds to the result
given by Saks in \cite[Theorem VI.(8.1) (p.~208)]{S} which states the following:
\begin{quote}
  \textit{A finite function $f$ integrable in the Lebesgue-Stieltjes $(LS)$ sense on
  an interval $[c,d]$ with respect to a function of bounded variation $\varphi,$ is also
  integrable in the Perron-Stieltjes $(PS)$ sense and we have}
\begin{align*}
     &(PS)\int_{c}^{d}\dd\left[\varphi(s)\right]\,f(s)
   \\
     &\quad=(LS)\int_{[c,d]}\dd\left[\varphi(s)\right]\,f(s)
            -\Delta^{-}\varphi(c)\,f(c)
			-\Delta^{+}\varphi(d)\,f(d).
  \end{align*}
\end{quote}
\end{remark}

\smallskip

\begin{theorem}\label{open}
Let $F\in(\mathcal{B})G([a,b],L(X)),$
\ $g{:}\,[a,b]\to X,$ $c,\,d\in\ab$ and $c<d.$
Then the integral \ $\dis\int_{(c,d)}\dd[F]\,g$ exists if and only if the integral
$\dis\int_{c}^{d}\dd[F]\,g$ exists. Moreover,
\begin{equation}\label{(c,d)}
\left.\begin{array}{l}\dis\hskip-8mm
  \int_{(c,d)}\dd[F]\,g
 \\[5mm]\dis
  =F(c)\,g(c)-\lim_{t\to c+}F(t)\,g(c)+\int_{c}^{d}\dd[F]\,g
   +\lim_{t\to d-}F(t)g(d)-F(d)g(d).
\end{array}\quad\right\}
\end{equation}
\end{theorem}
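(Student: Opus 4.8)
The plan is to mirror the proof of Theorem~\ref{compact}, exploiting the additivity of the integral over the partition $[a,b]=[a,c]\cup[c,d]\cup[d,b]$ together with the point evaluations from Proposition~\ref{prop-point}. First I would write, by Definition~\ref{D5.1},
\[
   \int_{(c,d)}\dd[F]\,g=\int_a^b\dd[F]\,(g\,\chi_{(c,d)}),
\]
and split the interval $[a,b]$ at the points $c$ and $d$. Since $\chi_{(c,d)}$ vanishes on $[a,c]$ and on $[d,b]$ except at the single points $c$ and $d$ respectively, the pieces $\int_a^c\dd[F]\,(g\,\chi_{(c,d)})$ and $\int_d^b\dd[F]\,(g\,\chi_{(c,d)})$ reduce to degenerate-interval integrals of the form $\int_{[c]}\dd[F]\,g$ and $\int_{[d]}\dd[F]\,g$, which exist by Proposition~\ref{prop-point}. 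On the central block, $g\,\chi_{(c,d)}$ coincides with $g$ on the open interval $(c,d)$ but is zero at the endpoints $c$ and $d$; I would compute $\int_c^d\dd[F]\,(g\,\chi_{(c,d)})$ by again splitting off the two endpoints and applying Proposition~\ref{prop-point} at $c$ (as a right endpoint of a degenerate interval) and at $d$ (as a left endpoint), leaving $\int_c^d\dd[F]\,g$ in the interior.

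The key algebraic step is to collect the point contributions. Using the three formulas in \eqref{point}, the evaluation at $c$ contributes $F(c)\,g(c)-\lim_{t\to c+}F(t)\,g(c)$ after accounting for the fact that $g\,\chi_{(c,d)}(c)=0$ cancels the naive boundary term and leaves only the jump across $c$ seen from the right; symmetrically, the evaluation at $d$ yields $\lim_{t\to d-}F(t)\,g(d)-F(d)\,g(d)$. Assembling
\[
   \int_a^b\dd[F]\,(g\,\chi_{(c,d)})
   =\int_{[c]}\dd[F]\,g+\int_c^d\dd[F]\,g+\int_{[d]}\dd[F]\,g
\]
and substituting the expressions from \eqref{point} then produces exactly \eqref{(c,d)}. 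The existence equivalence follows in both directions: if $\int_c^d\dd[F]\,g$ exists then all three summands exist and Theorem~\ref{add} guarantees their sum exists; conversely, since the two point-integrals always exist by Proposition~\ref{prop-point}, the existence of the left-hand side forces existence of the interior integral.

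The main obstacle I anticipate is bookkeeping the point-mass terms correctly, that is, making sure the values $g\,\chi_{(c,d)}(c)=g\,\chi_{(c,d)}(d)=0$ are handled consistently when I apply Proposition~\ref{prop-point} on the subintervals abutting $c$ and $d$. In particular, whether a given endpoint is treated as the right end of $[a,c]$ or the left end of $[c,d]$ changes which one-sided limit of $F$ appears, and I must verify that the half-open pieces assemble without double-counting or omitting a jump. The one subtlety worth an explicit remark is the boundary situation $c=a$ or $d=b$, where by convention $\Delta^-F(a)=\Delta^+F(b)=0$, so the corresponding limit term collapses to $F(a)\,g(a)$ or $F(b)\,g(b)$ and the formula remains valid; everything else is a routine reduction to Proposition~\ref{prop-point} and Theorem~\ref{add}.
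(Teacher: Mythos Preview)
Your overall strategy---splitting $[a,b]$ at $c$ and $d$ and handling the central block by subtracting point masses at the endpoints---can be made to work, but the bookkeeping in your proposal has gone wrong in two places.

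First, since $g\,\chi_{(c,d)}$ vanishes identically on $[a,c]$ and on $[d,b]$ (it is zero at $c$ and at $d$), the outer pieces $\int_a^c\dd[F]\,(g\,\chi_{(c,d)})$ and $\int_d^b\dd[F]\,(g\,\chi_{(c,d)})$ are simply zero; they do not reduce to point integrals $\int_{[c]}\dd[F]\,g$ or $\int_{[d]}\dd[F]\,g$.

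Second, and more seriously, your displayed identity
\[
   \int_a^b\dd[F]\,(g\,\chi_{(c,d)})
   =\int_{[c]}\dd[F]\,g+\int_c^{\,d}\dd[F]\,g+\int_{[d]}\dd[F]\,g
\]
is false. Substituting the middle formula from \eqref{point} for $\int_{[c]}\dd[F]\,g$ and $\int_{[d]}\dd[F]\,g$ (when $a<c<d<b$) produces terms involving $\lim_{t\to c-}F(t)\,g(c)$ and $\lim_{t\to d+}F(t)\,g(d)$, with the wrong signs, and you will not recover \eqref{(c,d)}. The computation you described verbally for the central block is actually
\[
   \int_c^{\,d}\dd[F]\,(g\,\chi_{(c,d)})
   =\int_c^{\,d}\dd[F]\,g
    -\int_c^{\,d}\dd[F]\,(g\,\chi_{[c]})
    -\int_c^{\,d}\dd[F]\,(g\,\chi_{[d]}),
\]
where the last two integrals must be evaluated via Proposition~\ref{prop-point} \emph{on the interval $[c,d]$} (so $c$ plays the role of the left endpoint and $d$ the role of the right endpoint in \eqref{point}), yielding $\lim_{t\to c+}F(t)\,g(c)-F(c)\,g(c)$ and $F(d)\,g(d)-\lim_{t\to d-}F(t)\,g(d)$. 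With the signs and the reference interval corrected, and the outer pieces set to zero, the argument does produce \eqref{(c,d)}.

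The paper takes a slightly different and cleaner route: it uses the decomposition $g\,\chi_{[c,d]}=g\,\chi_{[c]}+g\,\chi_{(c,d)}+g\,\chi_{[d]}$ together with Theorem~\ref{add} to write
\[
   \int_{(c,d)}\dd[F]\,g
   =\int_{[c,d]}\dd[F]\,g-\int_{[c]}\dd[F]\,g-\int_{[d]}\dd[F]\,g,
\]
and then invokes the already established Theorem~\ref{compact} for $\int_{[c,d]}\dd[F]\,g$. This keeps all point integrals defined over $[a,b]$, so only one version of \eqref{point} is needed and the sign bookkeeping is automatic.
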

\begin{proof}
By Proposition \ref{prop-point}, the integrals $\dis\int_{[c]}\dd[F]\,g$ and
$\dis\int_{[d]}\dd[F]\,g$ exist. Since
\[
   g\,\chi_{[c,d]}=g\,\chi_{[c]}+g\,\chi_{(c,d)}+g\,\chi_{[d]},
\]
it follows that the integral over $(c,d)$ exists if and only if the integral over
$[c,d]$ exists. In addition, by Theorem \ref{add} we obtain
\[
    \int_{[c,d]}\dd[F]\,g
   =\int_{[c]}\dd[F]\,g+\int_{(c,d)}\dd[F]\,g+\int_{[d]}\dd[F]\,g,
\]
which, together with \eqref{point} and Theorem~\ref{compact}, completes the proof.
\end{proof}

\smallskip

In a similar way, applying Proposition \ref{prop-point} and dealing with
characteristic functions, we can derive the following expressions for the integral
over half-open intervals.

\smallskip

\begin{theorem}\label{half}
Let $F\in(\mathcal{B})G([a,b],L(X)),$ \ $g{:}\,[a,b]\to X,$ \ $c,\,d\in\ab$
and $c<d.$ Then the following assertions are true:
\begin{enumerate}[$(i)$]
\item The integral $\dis\int_{[c,d)}\dd[F]\,g$ exists if and only if the integral
$\dis\int_{c}^{d} \dd[F]\,g$ exists.
Moreover,
\begin{equation}\label{[c,d)}
\left.\begin{array}{l}\dis\hskip-8mm
     \int_{[c,d)} \dd[F]\,g
    \\\dis\hskip-4mm
     =F(c)\,g(c)\,{-}\lim_{t\to c-}F(t)\,g(c)\,{+}\int_{c}^{d}\dd[F]\,g
                                \,{+}\lim_{t\to d-}F(t)\,g(d)-F(d)\,g(d).
\end{array}\right\}
\end{equation}

\item The integral $\dis\int_{(c,d]}\dd[F]\,g$ exists if and only if the integral
$\dis\int_{c}^{d}\dd[F]\,g$ exists. Moreover,
\begin{equation}\label{(c,d]}
\left.\begin{array}{l}\dis\hskip-8mm
    \int_{(c,d]}\dd[F]\,g
   \\\dis\hskip-4mm
    =F(c)\,g(c)\,{-}\lim_{t\to c+}F(t)\,g(c)\,{+}\int_{c}^{d}\dd[F]\,g
                                 \,{+}\lim_{t\to d+}F(t)\,g(d)-F(d)\,g(d).
\end{array}\right\}
\end{equation}
\end{enumerate}
\end{theorem}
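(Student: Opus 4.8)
The plan is to mirror the proof of Theorem~\ref{open}, replacing its three-term decomposition by the two-term decomposition appropriate to each half-open interval. For assertion~(i) I would write $[c,d)=[c]\cup(c,d)$ as a disjoint union of elementary sets, so that
\[
   g\,\chi_{[c,d)}=g\,\chi_{[c]}+g\,\chi_{(c,d)}.
\]
By Proposition~\ref{prop-point} the point integral $\int_{[c]}\dd[F]\,g$ always exists. Hence, writing $g\,\chi_{(c,d)}=g\,\chi_{[c,d)}-g\,\chi_{[c]}$ and invoking the linearity in Proposition~\ref{linear}, the integral $\int_{(c,d)}\dd[F]\,g$ exists if and only if $\int_{[c,d)}\dd[F]\,g$ does; combined with Theorem~\ref{open}, this yields the desired equivalence with the existence of $\int_c^d\dd[F]\,g$.

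When these integrals exist, Theorem~\ref{add} gives
\[
   \int_{[c,d)}\dd[F]\,g=\int_{[c]}\dd[F]\,g+\int_{(c,d)}\dd[F]\,g,
\]
and the remaining step is purely computational: substitute the explicit values from \eqref{point} and \eqref{(c,d)} and simplify. For $c\in(a,b)$ the point term contributes $\lim_{t\to c+}F(t)\,g(c)-\lim_{t\to c-}F(t)\,g(c)$, while the left-endpoint part of \eqref{(c,d)} contributes $F(c)\,g(c)-\lim_{t\to c+}F(t)\,g(c)$; the two occurrences of $\lim_{t\to c+}F(t)\,g(c)$ cancel, and the right-endpoint part $\lim_{t\to d-}F(t)\,g(d)-F(d)\,g(d)$ of \eqref{(c,d)} survives unchanged, producing exactly \eqref{[c,d)}. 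Assertion~(ii) is entirely analogous, using the decomposition $(c,d]=(c,d)\cup[d]$ and the cancellation of the two occurrences of $\lim_{t\to d-}F(t)\,g(d)$ between \eqref{(c,d)} and the point formula for $[d]$.

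The only point demanding care is the treatment of the endpoint cases $c=a$ and $d=b$, where the relevant branch of \eqref{point} takes a different form, carrying the terms $F(a)\,g(a)$ or $F(b)\,g(b)$ in place of a genuine two-sided jump. In those cases I would substitute the corresponding branch of \eqref{point} and verify that the same cancellation goes through, relying on the convention $\Delta^-F(a)=\Delta^+F(b)=0$. This bookkeeping is routine and presents no genuine obstacle; the substantive content is entirely carried by Theorem~\ref{open} and Proposition~\ref{prop-point}.
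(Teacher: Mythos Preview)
Your proposal is correct and follows essentially the same route the paper indicates: the paper does not spell out a proof of Theorem~\ref{half} but simply states that ``in a similar way, applying Proposition~\ref{prop-point} and dealing with characteristic functions'' one derives the half-open formulas, which is precisely your decomposition $g\,\chi_{[c,d)}=g\,\chi_{[c]}+g\,\chi_{(c,d)}$ (resp.\ $g\,\chi_{(c,d]}=g\,\chi_{(c,d)}+g\,\chi_{[d]}$) combined with Theorem~\ref{open} and the point formulas~\eqref{point}. The cancellation you describe and the handling of the boundary cases $c=a$, $d=b$ via the separate branches of~\eqref{point} are exactly what is needed.
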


\smallskip

\begin{remark}
According to Theorems \ref{compact}, \ref{open} and \ref{half},
for $F\in(\mathcal{B})G([a,b],L(X)),$ $g{:}\,\ab\,{\to}\,X$ and
$a\le c<d\le b,$
if one of the integrals
\begin{align*}
   \int_{[c,d]}\dd[F]\,g,\,\,\int_{[c,d)}\dd[F]\,g,\,\,\int_{(c,d]}\dd[F]\,g,
   \,\,\int_{(c,d)}\dd[F]\,g \mbox{ \ and \ } \int_c^{\,d}\dd[F]\,g
\end{align*}
exists, then all the others exist as well.

If we assume in addition $F\in C([a,b],L(X)),$ then
\[
   \int_{[c,d)}\dd[F]\,g=\int_{(c,d]}\dd[F]\,g=\int_{(c,d)}\dd[F]\,g=\int_c^{\,d}\dd[F]\,g
   =\int_{[c,d]}\dd[F]\,g.
\]
\end{remark}

\smallskip

We are now ready to deduce an expression for the integral over elementary sets.

\begin{theorem}\label{elementary}
Let $E$ be an elementary subset of $\ab.$ If $F\in(\mathcal{B})G([a,b],L(X))$ and
$g{:}\,\ab\,{\to}\,X$ are such that the integral $\dis\int_E\dd[F]\,g$ exists, then
\begin{equation}\label{int-elementary}
   \int_{E}\dd[F]\,g=\sum_{k=1}^{m}\int_{J_k}\dd[F]\,g,
\end{equation}
where $\{J_k{:}\,k=1\dots,m\}$ is the minimal decomposition of $E.$
\end{theorem}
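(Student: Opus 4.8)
The plan is to reduce the identity to the two-set additivity already established in Theorem~\ref{add}, the only genuine work being to show that each of the integrals $\int_{J_k}\dd[F]\,g$ on the right-hand side exists; once this is known, an $m$-fold iteration of Theorem~\ref{add} (together with the pairwise disjointness of the minimal decomposition) yields both the existence claim and formula \eqref{int-elementary} at once. I would therefore organize the argument as an induction on the number $m$ of intervals in the minimal decomposition, the case $m=1$ being $E=J_1$ and hence trivial. The point requiring care is that Theorem~\ref{add} supplies only the direction ``existence of the pieces implies existence of the union'', whereas here we are handed existence of the union and must recover the pieces.

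To extract the existence of $\int_{J_k}\dd[F]\,g$ I would exploit the separation built into the minimal decomposition. Order the intervals so that $J_1,\dots,J_m$ appear from left to right; since $J_k\cup J_{k+1}$ is never an interval and the $J_k$ are pairwise disjoint, between consecutive intervals there is a genuine gap, i.e.\ a point of $\ab$ lying in neither. Choose $a=r_0\le r_1\le\dots\le r_m=b$ with each interior $r_k$ taken in the gap separating $J_k$ from $J_{k+1}$, so that $r_{k-1}<\inf J_k\le\sup J_k<r_k$ for the interior indices (the end indices being handled by the degenerate choices $r_0=a$ and $r_m=b$), and so that $E\cap[r_{k-1},r_k]=J_k$ with $r_{k-1},r_k\notin E$ whenever they are interior gap points. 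On the compact interval $[r_{k-1},r_k]$ one then has $g\,\chi_E=g\,\chi_{J_k}$, because the two characteristic functions coincide on $(r_{k-1},r_k)$ (where $E$ reduces to $J_k$) and both vanish at the gap endpoints.

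The main obstacle is thus to pass from the hypothesis that $\int_a^b\dd[F]\,(g\,\chi_E)$ exists to the existence of the local integral $\int_{r_{k-1}}^{r_k}\dd[F]\,(g\,\chi_E)$. For this I would invoke the standard restriction (hereditary) property of the Kurzweil--Stieltjes integral, namely that existence of the integral over $\ab$ forces its existence over every compact subinterval (see the basic properties in \cite{Sch1}--\cite{Sch3} and \cite{MT1}--\cite{MT2}). Since the integrands agree on $[r_{k-1},r_k]$, this yields existence of $\int_{r_{k-1}}^{r_k}\dd[F]\,(g\,\chi_{J_k})$. As $g\,\chi_{J_k}$ vanishes identically on $[a,r_{k-1}]$ and on $[r_k,b]$, the corresponding integrals over those intervals are zero, and ordinary additivity over adjacent compact intervals gives that $\int_a^b\dd[F]\,(g\,\chi_{J_k})$ exists and equals $\int_{r_{k-1}}^{r_k}\dd[F]\,(g\,\chi_{J_k})$; by Definition~\ref{D5.1} this is precisely the statement that $\int_{J_k}\dd[F]\,g$ exists. (Alternatively, existence of the interval integral could be read off from Theorems~\ref{compact}, \ref{open} and~\ref{half}.)

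With the existence of every $\int_{J_k}\dd[F]\,g$ secured, the proof concludes by induction: writing $E=\big(\bigcup_{k=1}^{m-1}J_k\big)\cup J_m$ as a disjoint union of elementary sets, the inductive hypothesis gives existence of $\int_{\bigcup_{k<m}J_k}\dd[F]\,g=\sum_{k=1}^{m-1}\int_{J_k}\dd[F]\,g$, and a final application of Theorem~\ref{add} adds the contribution of $J_m$, producing \eqref{int-elementary}.
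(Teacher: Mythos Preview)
Your proof is correct and follows essentially the same route as the paper's: restrict $g\,\chi_E$ to a compact subinterval isolating $J_k$ via the hereditary property (the paper cites \cite[Proposition~8]{Sch1}), identify it there with $g\,\chi_{J_k}$, extend trivially to $\ab$, and finish with Theorem~\ref{add}. The only difference is cosmetic: the paper restricts to the closure $[c_k,d_k]$ of $J_k$ and handles the possible single-point remainders on $[a,c_k]$ and $[d_k,b]$ via Proposition~\ref{prop-point}, whereas your gap-point intervals $[r_{k-1},r_k]$ make those remainders identically zero (so that, incidentally, your argument does not actually use the hypothesis $F\in(\mathcal{B})G(\ab,L(X))$).
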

\begin{proof}
For $k=1,\dots,m,$ let $c_k$ and $d_k$ be such that $\left[c_k,d_k\right]$ is
the closure of $J_k.$  By the hypothesis the integral
\[
    \int_{E}\dd[F]\,g:=\int_a^b\dd[F]\,(g\,\chi_{E})
\]
exists, and hence, by \cite[Proposition 8]{Sch1}, so does the integral
$\int_{c_k}^{d_k}\dd[F]\,(g\,\chi_{E})$. Moreover,
\[
  \int_{c_k}^{d_k}\dd[F]\,(g\,\chi_{E})=\int_{c_k}^{d_k}\dd[F]\,(g\,\chi_{J_k})
  \quad\mbox{for \ }k=1,\dots,m.
\]

\smallskip

Notice that, for each $k=1,\dots,m,$ the integrals
\[
   \int_{a}^{c_k}\dd[F]\,(g\,\chi_{J_k})
   \quad\mbox{and}\quad
   \int_{d_k}^{b}\dd[F]\,(g\,\chi_{J_k})
\]
are either zero or integrals over one point sets (which exist by
Proposition~\ref{prop-point}). Therefore, the integral $\int_{J_k}\dd[F]\,g$ exists.

\smallskip

Having in mind that the intervals of minimal decomposition are pairwise disjoint, \eqref{int-elementary}
follows from Theorem \ref{add}.
\end{proof}

\smallskip

As a consequence of Definition \ref{D4.8}, Theorem \ref{int-inequality} and Theorem \ref{elementary}
we have the following.

\smallskip

\begin{corollary}\label{estimate-elementary}
Let $E$ be an elementary subset of $\ab.$ If $F\in BV(\ab,L(X))\cap C(\ab,L(X))$ and $g{:}\,\ab\,{\to}\,X$ are
such that the integral $\int_{E}\dd[F]\,g$ exists, then
\[
   \left\|\int_{E}\dd[F]\,g\right\|_X\leq \var(F,E)\,\Big(\sup_{t\in E}\|g(t)\|_X\Big).
\]
\end{corollary}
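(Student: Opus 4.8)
The plan is to combine the additivity of the integral over the minimal decomposition with the interval estimates from Theorem~\ref{int-inequality}, exploiting the continuity of $F$ to make the boundary terms vanish. First I would invoke Theorem~\ref{elementary} to write
\[
  \int_E\dd[F]\,g=\sum_{k=1}^m\int_{J_k}\dd[F]\,g,
\]
where $\{J_k{:}\,k=1,\dots,m\}$ is the minimal decomposition of $E.$ This immediately reduces the problem to estimating each single-interval integral $\int_{J_k}\dd[F]\,g$ and then summing. Applying the triangle inequality in $X$ gives
\[
  \Big\|\int_E\dd[F]\,g\Big\|_X\le\sum_{k=1}^m\Big\|\int_{J_k}\dd[F]\,g\Big\|_X,
\]
so it suffices to bound each summand by $\var_{J_k}F\,\bigl(\sup_{t\in E}\|g(t)\|_X\bigr).$

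Next I would apply Theorem~\ref{int-inequality} to each $J_k.$ The key simplification is that $F\in C(\ab,L(X))$ forces $\Delta^-F=\Delta^+F=0$ everywhere, so in every one of the four cases of Theorem~\ref{int-inequality} the boundary correction terms $\|\Delta^-F(c)\|_{L(X)}\|g(c)\|_X$ and $\|\Delta^+F(d)\|_{L(X)}\|g(d)\|_X$ are identically zero. Hence regardless of whether $J_k$ is open, closed, or half-open, the estimate collapses to
\[
  \Big\|\int_{J_k}\dd[F]\,g\Big\|_X\le\var_{J_k}F\,\Big(\sup_{t\in J_k}\|g(t)\|_X\Big).
\]
Since $J_k\subseteq E,$ we have $\sup_{t\in J_k}\|g(t)\|_X\le\sup_{t\in E}\|g(t)\|_X,$ so each term is bounded by $\var_{J_k}F\,\bigl(\sup_{t\in E}\|g(t)\|_X\bigr).$

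Finally I would sum over $k$ and factor out the common supremum:
\[
  \sum_{k=1}^m\var_{J_k}F\,\Big(\sup_{t\in E}\|g(t)\|_X\Big)
  =\Big(\sum_{k=1}^m\var_{J_k}F\Big)\Big(\sup_{t\in E}\|g(t)\|_X\Big)
  =\var(F,E)\,\Big(\sup_{t\in E}\|g(t)\|_X\Big),
\]
where the last equality is precisely Definition~\ref{D4.8}. Chaining these inequalities yields the claim. The only subtlety to check is that each single-interval integral actually exists so that Theorem~\ref{int-inequality} applies; this is guaranteed by the existence argument already carried out inside the proof of Theorem~\ref{elementary}, which shows each $\int_{J_k}\dd[F]\,g$ exists as a consequence of the existence of $\int_E\dd[F]\,g.$ I expect no genuine obstacle here: the continuity hypothesis does all the work by killing the jump terms, and the additivity of $\var(F,\cdot)$ over the minimal decomposition matches the additivity of the integral exactly.
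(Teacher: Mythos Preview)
Your proposal is correct and is exactly the argument the paper has in mind: the corollary is stated there without proof, merely as ``a consequence of Definition~\ref{D4.8}, Theorem~\ref{int-inequality} and Theorem~\ref{elementary},'' and you have spelled out precisely that chain of implications, with the continuity of $F$ killing the jump terms in Theorem~\ref{int-inequality}. The only tiny omission is that a minimal decomposition may contain degenerate intervals $[c]$, which Theorem~\ref{int-inequality} does not cover; but for continuous $F$ Proposition~\ref{prop-point} gives $\int_{[c]}\dd[F]\,g=0$ and $\var_{[c]}F=0$ by convention, so these terms contribute nothing on either side.
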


\smallskip

Theorem \ref{elementary} also means that once the integral over an elementary set exists
so does the integral over subintervals of the minimal decomposition.
This implies immediately that the following assertions hold.

\smallskip

\begin{corollary}
Let $E$ be an elementary subset of $\ab.$ If $F\in(\mathcal{B})G([a,b],L(X))$ and $g{:}\,\ab\,{\to}\,X$
are such that the integral $\dis\int_E\dd[F]\,g$ exists, then the integral $\dis\int_T\dd[F]\,g$ exists for every
elementary subset $T$ of $\ab,$ with $T\subseteq E.$
\end{corollary}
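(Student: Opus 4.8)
The plan is to reduce the existence of $\int_T\dd[F]\,g$ to the existence of the integrals over the individual intervals in the minimal decomposition of $T$, and then to glue these back together by additivity. Write $\{J_k{:}\,k=1,\dots,m\}$ for the minimal decomposition of $E$ and $\{I_\ell{:}\,\ell=1,\dots,p\}$ for that of $T.$ Since $\int_E\dd[F]\,g$ exists, Theorem~\ref{elementary} (more precisely the argument in its proof, which invokes \cite[Proposition 8]{Sch1}) guarantees that each of the integrals $\int_{J_k}\dd[F]\,g$ exists. The intervals $I_\ell$ are pairwise disjoint, so once we know that each $\int_{I_\ell}\dd[F]\,g$ exists, Theorem~\ref{add} (applied $p-1$ times) will yield the existence of $\int_T\dd[F]\,g=\sum_{\ell=1}^p\int_{I_\ell}\dd[F]\,g.$ Thus the whole problem reduces to showing that the integral over each $I_\ell$ exists.

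First I would observe that each $I_\ell$ lies inside a single $J_k.$ Indeed, $I_\ell\subseteq T\subseteq E=\bigcup_k J_k,$ and the intervals of a minimal decomposition are separated in the sense that between any two of them there is a point of $\ab$ not belonging to $E$ (otherwise their union would again be an interval, contradicting minimality). Since $I_\ell$ is itself an interval, hence connected, it cannot contain such a gap point, and therefore it must be contained in exactly one of the $J_k.$

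Next, the key step is to prove the auxiliary claim: \emph{if the integral over an interval $J$ exists and $I\subseteq J$ is a subinterval, then the integral over $I$ exists.} If $I$ is degenerate, i.e. a single point, this is immediate from Proposition~\ref{prop-point}. If $I$ has endpoints $c'<d',$ let $c_k=\inf J$ and $d_k=\sup J,$ so that $c_k\le c'<d'\le d_k.$ From the existence of $\int_J\dd[F]\,g$ and the equivalences recorded in Theorems~\ref{compact}, \ref{open} and \ref{half} (summarised in the remark following Theorem~\ref{half}), the ordinary KS-integral $\int_{c_k}^{d_k}\dd[F]\,g$ exists; by \cite[Proposition 8]{Sch1} its restriction $\int_{c'}^{d'}\dd[F]\,g$ over the subinterval $[c',d']$ exists as well; and applying the same equivalences once more at the endpoints $c',d'$ shows that $\int_I\dd[F]\,g$ exists, whichever of the four interval types $I$ happens to be. The case in which $J=J_k$ is itself a single point is trivial, since then $I=J_k.$

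Combining these, each $I_\ell$ lies in some $J_k$ over which the integral exists, so the auxiliary claim gives the existence of $\int_{I_\ell}\dd[F]\,g,$ and Theorem~\ref{add} completes the argument. I expect the main obstacle to be organisational rather than conceptual: one must carefully track the various interval types (open, half-open, closed, degenerate) and the positions of the endpoints relative to $a$ and $b$ so that the equivalence theorems are applied in their correct form, and one must justify the geometric containment $I_\ell\subseteq J_k$ from the minimality of the decomposition. The analytic substance is already contained in \cite[Proposition 8]{Sch1} and in the existence equivalences established earlier.
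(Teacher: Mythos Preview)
Your proposal is correct and follows exactly the line the paper only sketches: the paper remarks that Theorem~\ref{elementary} already shows the integral over each $J_k$ of the minimal decomposition of $E$ exists, and then states the corollary as an immediate consequence without further detail. Your write-up supplies precisely the missing steps---the containment $I_\ell\subseteq J_k$, the passage from $\int_J$ to $\int_{c_k}^{d_k}$ to $\int_{c'}^{d'}$ to $\int_I$ via the equivalence theorems and \cite[Proposition 8]{Sch1}, and the final reassembly via Theorem~\ref{add}---so there is nothing to correct.
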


\smallskip

\begin{corollary}
Let $E_{1}$ and $E_{2}$ be elementary subsets of $[a,b],$
$F\in(\mathcal{B})G([a,b],L(X))$ and $g{:}\,[a,b]\,{\to}\,X.$ If both the integrals
\[
   \int_{E_1}\dd[F]\,g \quad\mbox{and} \int_{E_2}\dd[F]\,g
\]
exist, then the integral $\dis\int_{E_1\cup E_2}\dd[F]\,g$ exists and
\begin{equation}\label{additivity}
    \int_{E_1\cup E_2}\dd[F]\,g
   =\int_{E_1}\dd[F]\,g+\int_{E_2}\dd[F]\,g-\int_{E_1\cap E_2}\dd[F]\,g.
\end{equation}
Similarly, if the integral $\dis\int_{E_1\cup E_2}\hskip-3mm\dd[F]\,g$ exists, then both
the integrals $\dis\int_{E_1}\hskip-3mm\dd[F]\,g$ and $\dis\int_{E_2}\hskip-3mm\dd[F]\,g$
exist and the equality \eqref{additivity} holds.
\end{corollary}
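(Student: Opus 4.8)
The plan is to reduce both assertions to the elementary set-theoretic identity
\[
   \chi_{E_1\cup E_2}=\chi_{E_1}+\chi_{E_2}-\chi_{E_1\cap E_2},
\]
combined with the linearity of the integral and the preceding corollary on integrability over elementary subsets. First I would record two routine facts: the intersection $E_1\cap E_2$ of two elementary sets is again elementary (a finite intersection of finite unions of intervals is a finite union of intervals), and $E_1\cap E_2\subseteq E_1\subseteq E_1\cup E_2$ together with $E_2\subseteq E_1\cup E_2$.

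For the first assertion, assume both $\dis\int_{E_1}\dd[F]\,g$ and $\dis\int_{E_2}\dd[F]\,g$ exist. Since $E_1\cap E_2$ is an elementary subset of $E_1$, the preceding corollary guarantees that $\dis\int_{E_1\cap E_2}\dd[F]\,g$ exists as well. Multiplying the displayed identity by $g$ gives $g\,\chi_{E_1\cup E_2}=g\,\chi_{E_1}+g\,\chi_{E_2}-g\,\chi_{E_1\cap E_2}$; hence, by Definition~\ref{D5.1} and Proposition~\ref{linear} applied with $E=\ab$ (iterated once to accommodate three summands), the integral $\dis\int_a^b\dd[F]\,(g\,\chi_{E_1\cup E_2})$ exists and equals $\dis\int_{E_1}\dd[F]\,g+\int_{E_2}\dd[F]\,g-\int_{E_1\cap E_2}\dd[F]\,g$, which is exactly \eqref{additivity}.

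For the converse, suppose $\dis\int_{E_1\cup E_2}\dd[F]\,g$ exists. As $E_1$, $E_2$ and $E_1\cap E_2$ are all elementary subsets of $E_1\cup E_2$, the preceding corollary yields the existence of each of $\dis\int_{E_1}\dd[F]\,g$, $\dis\int_{E_2}\dd[F]\,g$ and $\dis\int_{E_1\cap E_2}\dd[F]\,g$. With all four integrals now at hand, the equality \eqref{additivity} follows precisely as in the first part, again from the characteristic-function identity and the linearity provided by Proposition~\ref{linear}.

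The whole argument is essentially bookkeeping once the preceding corollary is in place; the only steps demanding any attention are verifying that $E_1\cap E_2$ is genuinely elementary (so that Definition~\ref{D5.1} applies to it) and keeping track of the three characteristic functions. I do not expect a substantial obstacle here.
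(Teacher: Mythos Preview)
Your proposal is correct and matches the paper's intended approach: the paper does not spell out a proof for this corollary but presents it as an immediate consequence of the preceding corollary on elementary subsets, and your argument via the characteristic-function identity $\chi_{E_1\cup E_2}=\chi_{E_1}+\chi_{E_2}-\chi_{E_1\cap E_2}$ together with linearity (exactly paralleling the paper's own proof of Theorem~\ref{add}) is precisely the reasoning being gestured at.
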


\smallskip

\begin{remark}
Of course, results analogous to those given in this section for integrals of the form
\quad $\dis\int_{E}\dd[F]\,g$ \quad can be obtained in a similar way also for the symmetrical
case \quad $\dis\int_{E}F\,\dd[g].$
\end{remark}

\section{Bounded convergence theorem}

Now we can prove our main result, which is Theorem \ref{Osgood}. To this aim, we will
first treat the case when the integrator is a function of bounded variation which
is also continuous. This is the content of the following theorem whose proof uses
the integration over elementary sets discussed in the previous section.

\smallskip

\begin{theorem}\label{Osgood-cont}
Let $F\in BV(\ab,L(X))\cap C(\ab,L(X))$ and assume that the sequence $\{g_n\}\subset G(\ab,X)$
is such that
\[
   \lim_{n\to\infty}g_n(t)=0\quad \mbox{for \ } t\in\ab
\]
and
\[
   \|g_n\|_\infty\le K<\infty \quad\mbox{for \ } n\in\N.
\]
Then the integral $\dis\int_a^b\dd[F]\,g_n$ exists for each $n\in\N$ and
\begin{equation}\label{lim-int}
   \lim_{n\to\infty}\int_a^b\dd[F]\,g_n=0.
\end{equation}
\end{theorem}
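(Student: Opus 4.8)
The plan is to reduce the estimate of $\int_a^b\dd[F]\,g_n$ to a control of the variation of $F$ over the elementary subsets of $\ab$ on which $g_n$ stays large, and then to feed these sets into Lemma~\ref{measure}. Existence is immediate: applying Proposition~\ref{existence} with $E=\ab$ and noting that $\int_{[a,b]}\dd[F]\,g_n=\int_a^b\dd[F]\,(g_n\chi_{[a,b]})=\int_a^b\dd[F]\,g_n$, each integral $\int_a^b\dd[F]\,g_n$ exists. So the whole content is the limit relation \eqref{lim-int}.

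Fix $\eps>0$ and set $A_n=\{\,t\in\ab{:}\ \|g_m(t)\|_X\ge\eps\ \text{for some}\ m\ge n\,\}$. The sequence $\{A_n\}$ is clearly contracting, and $\bigcap_n A_n=\emptyset$: a point $t$ in this intersection would satisfy $\|g_m(t)\|_X\ge\eps$ for infinitely many $m$, contradicting $g_m(t)\to0$. Since $F\in BV(\ab,L(X))\cap C(\ab,L(X))$, Lemma~\ref{measure} applies and gives $v_n:=\sup\{\,\var(F,E){:}\ E\in\mathcal E(A_n)\,\}\to0$ as $n\to\infty$. This is the quantity that will absorb the part of the integral coming from where $g_n$ is large.

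Next, for each fixed $n$ I would isolate that large part using the regulatedness of $g_n$. Choose a finite step function $h_n$ with $\|g_n-h_n\|_\infty<\eps$ and a division $\{s_0,\dots,s_p\}$ of $\ab$ such that $h_n$ equals a constant $c_i\in X$ on each open interval $(s_{i-1},s_i)$. Call $(s_{i-1},s_i)$ \emph{big} if $\|c_i\|_X\ge2\eps$ and \emph{small} otherwise. On a big interval $\|g_n(t)\|_X\ge\|c_i\|_X-\eps>\eps$, so every big interval lies in $A_n$; hence their union $E_n$ is an elementary subset of $A_n$ and $\var(F,E_n)\le v_n$. On the union $T_n$ of the small intervals one has $\|g_n(t)\|_X\le\|c_i\|_X+\eps<3\eps$. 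Since $\ab=E_n\cup T_n\cup\{s_0,\dots,s_p\}$ is a disjoint union of elementary sets and $\int_a^b\dd[F]\,g_n$ exists, finite additivity (Theorem~\ref{add}, Theorem~\ref{elementary} and its corollaries) yields
\[
   \int_a^b\dd[F]\,g_n=\int_{E_n}\dd[F]\,g_n+\int_{T_n}\dd[F]\,g_n+\sum_{i=0}^{p}\int_{[s_i]}\dd[F]\,g_n .
\]
Because $F$ is continuous, each point integral vanishes by Proposition~\ref{prop-point}. Estimating the remaining two terms by Corollary~\ref{estimate-elementary} gives
\[
   \Big\|\int_a^b\dd[F]\,g_n\Big\|_X\le K\,v_n+3\eps\,\var_a^b F ,
\]
and letting $n\to\infty$ and then $\eps\to0+$ completes the argument.

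The main obstacle is the decomposition step: converting the sublevel structure of the regulated function $g_n$ into an honest \emph{elementary} set $E_n$ that is genuinely contained in $A_n$, so that Lemma~\ref{measure} can be invoked, and simultaneously disposing of the finitely many division points (where $g_n$ may still be as large as $K$). The continuity of $F$ is exactly what makes the latter harmless, since it forces $\int_{[s_i]}\dd[F]\,g_n=0$; this is also why the whole scheme requires the continuous integrator treated here before the general case.
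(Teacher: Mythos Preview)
Your proof is correct and follows essentially the same route as the paper: the same contracting sets $A_n$, the same appeal to Lemma~\ref{measure}, and the same step-function approximation of $g_n$ to locate an elementary subset of $A_n$ carrying the large values. The only execution difference is that the paper integrates the step function $h_n$ itself over the complementary elementary sets $U_n=\{t:\|h_n(t)\|_X\ge\text{threshold}\}$ and $V_n=\ab\setminus U_n$ (so no leftover division points appear) and absorbs the discrepancy via $\big\|\int_a^b\dd[F](g_n-h_n)\big\|_X\le\eps'\,\var_a^bF$, whereas you integrate $g_n$ directly over $E_n\cup T_n\cup\{s_0,\dots,s_p\}$ and kill the point contributions using the continuity of $F$; the paper also normalises the threshold by $\var_a^bF$ to obtain a one-shot $\eps$-bound in place of your two-step limit $n\to\infty$, $\eps\to0+$.
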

\begin{proof}
If $\var_a^bF=0,$ then $\dis\int_a^b\dd[F]\,g_n=\int_a^b\dd[F]\,g=0$ for all $n\inn\N.$
Therefore, without loss of generality we may assume that $\var_a^bF\ne 0.$ Let $\eps>0$
be given. For any $n\in\N$ define
\[
   A_n=\{t\in\ab{:}\,\|g_m(t)\|_X\ge\frac{\eps}{6\,\var_a^bF}
       \mbox{\ for some \ } m\ge n\,\}.
\]
Clearly, $A_{n+1}\subseteq A_n$ for $n\in\N$ and $\bigcap_n A_n=\emptyset.$ Let $v_n$
be given as in \eqref{alpha} with $F$ in the place of $f.$ Recall that $v_n=0$ if
$A_n=\emptyset.$ By Lemma \ref{measure} we have  $\dis\lim_{n\to\infty}v_n=0.$ In particular,
there exists $n_\eps\in\N$ such that
\begin{equation}\label{r6.2}
   v_n<\frac{\eps}{6\,K}\quad \mbox{for \ } n\ge n_\eps.
\end{equation}
As a consequence, we have
\begin{equation}\label{r6.3}
   \var(F,E)<\frac{\eps}{6\,K}
   \quad\mbox{for any \ } n\ge n_\eps
        \mbox{ \ and any \ } E\in\mathcal{E}(A_n).
\end{equation}

\smallskip

Now, fix $n\ge n_\eps.$ Since $g_n$ is regulated, by \cite[Theorem I.3.1]{H} there exists
a step function $h_n{:}\,\ab\,{\to}\,X$ such that
\begin{equation}\label{step}
   \|h_n-g_n\|_\infty<\eps\,',
\end{equation}
where $\eps\,'=\min\left\{\dfrac{\eps}{6\,\var_a^bF},K\right\}.$ In particular, we have
\begin{equation}\label{r6.5}
   \|h_n\|_\infty<K+\eps\,'\leq 2\,K.
\end{equation}
Let us consider the sets
\[
   U_n=\Big\{\,t\in\ab{:}\,\|h_n(t)\|_X\ge\frac{\eps}{3\,\var_a^bF}\,\Big\}
   \quad\mbox{and}\quad
   V_n=\ab{\setminus}\, U_n.
\]
Obviously, $U_n$ and $V_n$ are elementary sets (possibly empty). Furthermore, by \eqref{step}
we obtain
\[
    \frac{\eps}{6\,\var_a^bF}
   =\frac{\eps}{3\,\var_a^bF}-\frac{\eps}{6\,\var_a^bF}
   \le\|h_n(t)\|_X-\eps{\,'}<\|g_n(t)\|_X
   \quad\mbox{for \ } t\in U_n.
\]
This means that $U_n$ is an elementary subset of $A_n,$ which implies that
\begin{equation}\label{r6.6}
    \var(F,U_n)<\frac{\eps}{6\,K}\,.
\end{equation}
Moreover, by \cite[Proposition 10]{Sch1} and \eqref{step} we deduce that
\begin{align*}
   \Big\|\int_a^b \dd[F]\,g_n\Big\|_X
   &\le\Big\|\int_a^b\dd[F]\,(g_n-h_n)\Big\|_X+\Big\|\int_a^b\dd[F]\,h_n\Big\|_X
  \\
   &\le\var_a^b F\,\|g_n-h_n\|_\infty+\Big\|\int_a^b\dd[F]\,h_n\Big\|_X
      <\frac{\eps}{6}+\Big\|\int_a^b\dd[F]\,h_n\Big\|_X\,,
\end{align*}
i.e.
\begin{equation}\label{r6.7}
  \Big\|\int_a^b\dd[F]\,g_n\Big\|_X
  \le\frac{\eps}{6}+\Big\|\int_a^b\dd[F]\,h_n\Big\|_X\,.
\end{equation}
On the other hand, it follows from Theorem \ref{add}, Corollary \ref{estimate-elementary}
and relations \eqref{r6.5}, \eqref{r6.6}, and from the definition of $V_n$ that
\begin{align*}
    \Big\|\int_a^b\dd[F]\,h_n\Big\|_X
   &\le\Big\|\int_{U_n}\dd[F]\,h_n\Big\|_X+\Big\|\int_{V_n}\dd[F]\,h_n\Big\|_X
  \\
   &\le\var(F,U_n)\,\|h_n\|_\infty+\var(F,V_n)\,\Big(\sup_{t\in V_n}\|h_n(t)\|_X\Big)
  \\
   &\le\frac{\eps}{6\,K}\,2\,K+\var_a^bF\,\frac{\eps}{3\,\var_a^bF}
    <\frac{2\,\eps}{3}\,.
\end{align*}
Inserting this into \eqref{r6.7}, we conclude that
\[
   \dis\Big\|\int_a^b\dd[F]\,g_n\Big\|_X{<}\,\eps
   \quad\mbox{for \ } n\ge n_\eps,
\]
wherefrom the desired relation \eqref{lim-int} follows.
\end{proof}

\smallskip

The following assertion will be useful in the proof of Theorem \ref{Osgood} and, to our
knowledge, is not available in literature.

\smallskip

\begin{lemma}\label{break-lim}
Let $F\in BV(\ab, L(X))$ and let $F^B$ be the break function from the Jordan
decomposition of $F.$ Then, for every $g\in G(\ab,X),$ we have
\[
   \int_a^b\dd[F^B]\,g=\sum_{t\in W}\Delta F(t)\,g(t)
\]
where $W$ stands for the set of points of discontinuity of $F$ in $\ab.$
\end{lemma}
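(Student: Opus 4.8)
The plan is to reduce the claim to the finite truncations $F_n^B$ from Remark~\ref{jordan-rem} and then pass to the limit. Writing $\{s_k\}=W$ for the set of discontinuity points of $F$, recall that
\[
   F_n^B(t)=\sum_{k=1}^n\big[\Delta^+F(s_k)\,\chi_{(s_k,b]}(t)+\Delta^-F(s_k)\,\chi_{[s_k,b]}(t)\big]
\]
and that $\lim_{n\to\infty}\var_a^b(F_n^B-F^B)=0.$ Since $F^B$ is a break function, it belongs to $BV(\ab,L(X)),$ so both $\int_a^b\dd[F^B]\,g$ (by Proposition~\ref{existence} with $E=\ab$) and each $\int_a^b\dd[F_n^B]\,g$ (as $F_n^B$ is a step function) exist, while $g$ is bounded because it is regulated.

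The key step is to verify that $\int_a^b\dd[F_n^B]\,g=\sum_{k=1}^n\Delta F(s_k)\,g(s_k).$ By linearity in the integrator (the symmetric form of Proposition~\ref{linear}) it suffices to treat a single index: for $s=s_k$ set $c=\Delta^+F(s),$ $d=\Delta^-F(s)$ and $G=c\,\chi_{(s,b]}+d\,\chi_{[s,b]},$ so that $\Delta G(s)=c+d=\Delta F(s)$ while $G$ is constant on $[a,s)$ and on $(s,b].$ I would take a gauge $\delta$ with $\delta(t)<|t-s|$ for $t\ne s$; then in any $\delta$-fine tagged division every subinterval meeting $s$ is forced to be tagged by $s,$ and since $G$ has no increments elsewhere, each Stieltjes sum $S(\dd G,g,P)$ collapses to $(c+d)\,g(s)$ independently of $P.$ Hence $\int_a^b\dd[G]\,g=\Delta F(s)\,g(s).$ The endpoint cases $s=a$ and $s=b$ are consistent with this thanks to the conventions $\Delta^-F(a)=\Delta^+F(b)=0,$ and may alternatively be read off from Proposition~\ref{prop-point} applied on the adjacent subintervals $[a,s]$ and $[s,b].$

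It remains to let $n\to\infty.$ On one side, using the standard estimate $\big\|\int_a^b\dd[H]\,g\big\|_X\le\var_a^b H\,\|g\|_\infty$ (cf.~\cite[Proposition~10]{Sch1}) with $H=F_n^B-F^B,$ linearity gives
\[
   \Big\|\int_a^b\dd[F^B]\,g-\int_a^b\dd[F_n^B]\,g\Big\|_X
   \le\var_a^b(F_n^B-F^B)\,\|g\|_\infty\to 0\quad\text{as }n\to\infty.
\]
On the other side, $\sum_k\|\Delta F(s_k)\|_{L(X)}<\infty$ (as $F\in BV(\ab,L(X))$) together with the boundedness of $g$ makes $\sum_k\Delta F(s_k)\,g(s_k)$ absolutely convergent, so that $\sum_{k=1}^n\Delta F(s_k)\,g(s_k)\to\sum_{t\in W}\Delta F(t)\,g(t).$ Passing to the limit in the identity of the previous paragraph yields the assertion.

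The only genuinely delicate point is the single-jump computation: one must fix the localizing gauge so that $s$ is forced to be a tag, and then check in each of the three positions of $s$ (interior point, left endpoint $a,$ right endpoint $b$) that the surviving increments of $G$ sum exactly to $\Delta F(s);$ everything else is a routine combination of the variation estimate and an absolute-convergence argument.
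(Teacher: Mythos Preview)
Your proof is correct and follows essentially the same route as the paper: truncate $F^B$ by the finite sums $F_n^B$, compute $\int_a^b\dd[F_n^B]\,g=\sum_{k=1}^n\Delta F(s_k)\,g(s_k)$, and pass to the limit via the estimate $\|\int_a^b\dd[F_n^B-F^B]\,g\|_X\le\var_a^b(F_n^B-F^B)\,\|g\|_\infty\to0$. The only difference is that the paper disposes of the single-jump computation by citing \cite[Proposition~2.3.3]{T1}, whereas you supply a direct gauge argument forcing $s$ to be a tag; your argument is a correct inline proof of exactly that cited fact.
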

\begin{proof}
Since $F$ is regulated, we know that $W$ is a countable set (cf. \cite[Corollary I.3.2]{H}).
Moreover, the sum $\dis\sum_{t\in W}\Delta F(t)\,g(t)$ is well-defined due
to \cite[Lemma 4.1]{MT1}.

Let $W=\{s_k\}$ and note that the function $F^B$ is defined as in \eqref{f-2} (with $F$
in place of $f$). For each $n\in\N,$ consider the function
\[
   F_n^B(t)=\sum_{k=1}^n\big[\Delta^+F(s_k)\,\chi_{(s_k,b]}(t)
                            +\Delta^-F(s_k)\,\chi_{[s_k,b]}(t)\big]
   \quad\mbox{for \ } t\in\ab.
\]
According to Remark \ref{jordan-rem}, we have $\dis\lim_{n\to\infty}\var_a^b(F_n^B-F^B)=0.$
This, together with \cite[Proposition 10]{Sch1}, implies that
\[
    \lim_{n\to\infty}\left\|\int_a^b\dd[F_n^B-F^B]\,g\right\|_X=0,
\]
that is,
\[
    \int_a^b\dd[F^B]\,g=\lim_{n\to\infty}\int_a^b\dd[F_n^B]\,g.
\]
To complete the proof it is enough to determine explicitly the integrals
$\dis\int_a^b\dd[F_n^B]\,g,$ $n\in\N.$  Applying \cite[Proposition 2.3.3]{T1}
(with an obvious extension to Banach space-valued functions), we obtain
\[
   \int_a^b\dd[F_n^B]\,g
   =\sum_{k=1}^n\big[\Delta^+F(s_k)\,g(s_k)+\Delta^-F(s_k)\,g(s_k)\big]
   =\sum_{k=1}^n\Delta F(s_k)\,g(s_k)
\]
wherefrom the result follows.
\end{proof}

\smallskip

\begin{theorem}\label{Osgood}
Let $F\in BV(\ab,X)$ and let a function $g\in G(\ab,X)$ and a sequence $\{g_n\}\subset G(\ab,X)$
be such that
\[
   \lim_{n\to\infty}g_n(t)=g(t)\quad \mbox{for \ }t\in\ab
\]
and
\[
   \|g_n\|_\infty\le K<\infty\quad\mbox{for \ }n\in\N.
\]
Then the integrals $\dis\int_a^b\dd[F]\,g$ and $\dis\int_a^b\dd[F]\,g_n$ exist for all
$n\in\N$ and
\begin{equation}\label{convergence}
   \lim_{n\to\infty}\int_a^b\dd[F]\,g_n=\int_a^b\dd[F]\,g.
\end{equation}
\end{theorem}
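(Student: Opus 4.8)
The plan is to establish existence of all the integrals first, and then to reduce the convergence claim to the special case $g\equiv 0$ already settled in Theorem~\ref{Osgood-cont}, by splitting the integrator $F$ into its continuous and break parts via the Jordan decomposition.

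Since $F\in BV(\ab,L(X))$ and $g,g_n\in G(\ab,X)$, Proposition~\ref{existence} applied with $E=\ab$ immediately guarantees that $\int_a^b\dd[F]\,g$ and $\int_a^b\dd[F]\,g_n$ all exist. By the linearity of the integral with respect to the integrand (cf.~Proposition~\ref{linear}), establishing \eqref{convergence} is equivalent to proving that $\int_a^b\dd[F]\,h_n\to 0$, where $h_n:=g_n-g$. Here $h_n\in G(\ab,X)$, one has $h_n(t)\to 0$ for every $t\in\ab$, and since the regulated function $g$ is bounded we get $\|h_n\|_\infty\le K+\|g\|_\infty<\infty$. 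Thus it suffices to treat the case $g\equiv 0$, which I henceforth assume.

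Next I would invoke Theorem~\ref{jordan} to write $F=F^{\,C}+F^B$ with $F^{\,C}\in BV(\ab,L(X))\cap C(\ab,L(X))$ and $F^B$ the break function, and then use linearity of the integral in the integrator to split
\[
   \int_a^b\dd[F]\,g_n=\int_a^b\dd[F^{\,C}]\,g_n+\int_a^b\dd[F^B]\,g_n
\]
(both integrals on the right exist, again by Proposition~\ref{existence}). The continuous summand vanishes in the limit directly by Theorem~\ref{Osgood-cont}. For the break summand, Lemma~\ref{break-lim} supplies the explicit representation
\[
   \int_a^b\dd[F^B]\,g_n=\sum_{t\in W}\Delta F(t)\,g_n(t),
\]
where $W$ denotes the countable set of discontinuities of $F$.

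The only genuine work left is to show that this last sum tends to $0$. The bound $\|\Delta F(t)\,g_n(t)\|_X\le K\,\|\Delta F(t)\|_{L(X)}$ combined with $\sum_{t\in W}\|\Delta F(t)\|_{L(X)}<\infty$ (a consequence of the break-function structure of $F^B$, cf.~Remark~\ref{jordan-rem}) provides a summable majorant independent of $n$, while each term satisfies $\Delta F(t)\,g_n(t)\to 0$. A routine discrete dominated convergence argument then closes the gap: given $\eps>0$, first pick a finite $W_0\subset W$ with $\sum_{t\in W\setminus W_0}\|\Delta F(t)\|_{L(X)}<\eps/(2K)$, and then choose $n$ so large that $\sum_{t\in W_0}\|\Delta F(t)\,g_n(t)\|_X<\eps/2$. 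I do not expect any single step to be a serious obstacle here; the substantive analytic content has already been absorbed into Theorem~\ref{Osgood-cont} (and through it into the measure-type Lemma~\ref{measure}), so that this final theorem amounts to assembling the Jordan decomposition, the continuous case, and an elementary series estimate for the jumps.
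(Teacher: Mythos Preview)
Your proposal is correct and follows essentially the same route as the paper: Jordan decomposition $F=F^{\,C}+F^B$, Theorem~\ref{Osgood-cont} for the continuous part, Lemma~\ref{break-lim} for the break part, and a tail-plus-finite-sum estimate for the resulting jump series. The only cosmetic difference is that you reduce to $g\equiv 0$ before splitting $F$, whereas the paper performs the reduction only on the continuous summand and handles $\int_a^b\dd[F^B]\,g_n\to\int_a^b\dd[F^B]\,g$ directly; the underlying argument is the same.
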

\begin{proof} Let $F=F^{\,C}+F^B$ be the decomposition of $F$ as in Theorem \ref{jordan},
that is, $F^{\,C}$ is continuous on $\ab$ and $F^B$ is the break function given by
\[
   F^B(t)=\sum_{a\leq s_k<t}\Delta^+F(s_k)+\sum_{a<s_k\leq t}\Delta^-F(s_k)
   \quad\mbox{for \ } t\in\ab,
\]
where $W=\{s_k\}\subset\ab$ is the set of discontinuities of $F$ in $\ab$.

By \cite[Propostion 15]{Sch1}, we know that $\dis\int_a^b\dd[F]\,g_n$ exists for
each $n\in\N,$ and from the linearity of the integral it follows that
\[
   \int_a^b\dd[F]\,g_n=\int_a^b\dd[F^{\,C}]\,g_n+\int_a^b\dd[F^B]\,g_n.
\]
Put $h_n=g_n-g$ for $n\in\N.$ Therefore $\dis\lim_{n\to\infty}h_n(t)\,{=}\,0,$ and
$\|h_n\|_\infty\,{\le}\,K\,{+}\,\|g\|_\infty$ for every $n\in\N.$ By Lemma \ref{Osgood-cont},
we obtain
\[
   \lim_{n\to\infty}\int_a^b\dd[F^{\,C}]\,h_n=0,
\]
that is,
\[
   \lim_{n\to\infty}\int_a^b\dd[F^{\,C}]\,g_n=\int_a^b\dd[F^{\,C}]\,g.
\]
Now, it is enough to prove the convergence of the sequence $\int_a^b\dd[F^B]\,g_n.$
To this aim, note that by Lemma \ref{break-lim} we have
\[
  \int_a^b\dd[F^B]\,g_n=\sum_{t\in W}\Delta F(t)\,g_n(t)
  \mbox{ \ and \ }
  \int_a^b\dd[F^B]\,g=\sum_{t\in W}\Delta F(t)\,g(t).
\]

If $W$ is finite, it is clear that
\[
  \lim_{n\to\infty}\sum_{t\in W}\Delta F(t)\,g_n(t)=\sum_{t\in W}\Delta F(t)\,g(t).
\]
and consequently \eqref{convergence} holds.

In case $W$ is countable, noting that
$\sum_{k{=}1}^\infty\|\Delta F(s_k)\|_X\le\var_a^bF<\infty$ (see \cite[Lemma 4.1]{MT2}),
there exists $p_\eps\in\N$ such that
\[
  \sum_{k=p_\eps+1}^\infty\|\Delta F(s_k)\|_X<\frac{\eps}{2\,(K+\|g\|_\infty)},
\]
and thus
\[
  \Big\|\sum_{k=p_\eps+1}^\infty\Delta F(s_k)\,[g_n(s_k)-g(s_k)]\Big\|_X
  \le(K+\|g\|_\infty)\sum_{k=p_\eps+1}^\infty\|\Delta F(s_k)\|_X
  <\frac{\eps}{2}.
\]
On the other hand, we may choose $n_\eps\in\N$ so that
\[
   \Big\|\sum_{k=1}^{p_\eps}\Delta F(s_k)\,[g_n(s_k)-g(s_k)]\Big\|_X<\frac{\eps}{2}
   \quad\mbox{for \ }n\ge n_\eps.
\]
With this in mind, we obtain
\[
    \Big\|\int_a^b\dd[F^B]\,(g_n-g)\Big\|_X
   =\Big\|\sum_{k=1}^\infty\Delta F(s_k)\,[g_n(s_k)-g(s_k)]\Big\|_X<\eps,
	\quad\mbox{ for \ } n\ge n_\eps
\]
which completes the proof.
\end{proof}


\end{document}